\theoremstyle{plain}
\newtheorem{thrm}{Theorem}[section]
\newtheorem{lmm}[thrm]{Lemma}
\newtheorem{prpstn}[thrm]{Proposition}
\newtheorem*{cnjctr}{Conjecture}
\newtheorem{hypthss}{Hypothesis}
\numberwithin{sblmm}{thrm}
\numberwithin{equation}{section}
\begin{document}
\title{Almost-prime $k$-tuples}
\author{James Maynard}
\address{Mathematical Institute, 24–-29 St Giles', Oxford, OX1 3LB}
\email{maynard@math.ox.ac.uk}
\thanks{Supported by EPSRC Doctoral Training Grant EP/P505216/1 }
\date{}
\subjclass[2010]{11N05, 11N35, 11N36}
\begin{abstract}
Let $k\ge 2$ and $\Pi(n)=\prod_{i=1}^k(a_in+b_i)$ for some integers $a_i, b_i$ ($1\le i\le k$). Suppose that $\Pi(n)$ has no fixed prime divisors. Weighted sieves have shown for infinitely many integers $n$ that $\Omega(\Pi(n))\le r_k$ holds for some integer $r_k$ which is asymptotic to $k\log{k}$. We use a new kind of weighted sieve to improve the possible values of $r_k$ when $k\ge 4$.
\end{abstract}
\maketitle
\section{Introduction}
We consider a set of integer linear functions
\begin{equation}
L_i(x)=a_ix+b_i,\qquad i\in\{1,\dots,k\}.
\end{equation}
We say such a set of functions is \textit{admissible} if their product has no fixed prime divisor. That is, for every prime $p$ there is an integer $n_p$ such that none of $L_i(n_p)$ are a multiple of $p$. We are interested in the following conjecture.
\begin{cnjctr}[Prime $k$-tuples Conjecture]
Given an admissible set of integer linear functions $L_i(x)$ ($i\in\{1,\dots,k\}$), there are infinitely many integers $n$ for which all the $L_i(n)$ are prime.
\end{cnjctr}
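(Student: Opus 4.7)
The plan is to establish the conjecture directly by showing that
\begin{equation*}
S(N) := \#\{N < n \le 2N : L_i(n) \in \mathbb{P}\ \text{for all}\ i\}
\end{equation*}
grows without bound as $N \to \infty$. The approach I would pursue combines a $k$-dimensional Selberg-type weighted sieve with a bilinear decomposition of the prime indicator, and it proceeds in three stages.

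\emph{Stage 1: Main term and upper bound.} First I would verify that the conjectural main term
\begin{equation*}
\mathfrak{S}(\mathcal{L}) := \prod_p \left(1 - \frac{\nu_p(\mathcal{L})}{p}\right)\left(1 - \frac{1}{p}\right)^{-k},
\end{equation*}
with $\nu_p(\mathcal{L}) := \#\{n \bmod p : p \mid \prod_{i=1}^k L_i(n)\}$, is strictly positive under admissibility, and that a $k$-dimensional $\Lambda^2$ upper-bound sieve yields $S(N) \ll \mathfrak{S}(\mathcal{L})\, N/(\log N)^k$. This pins down the expected size of $S(N)$ and confirms that the obstruction to the conjecture lies entirely on the lower-bound side.

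\emph{Stage 2: Weighted lower bound.} Next I would introduce nonnegative weights of Maynard--Tao shape
\begin{equation*}
w(n) = \left(\sum_{\substack{d_i \mid L_i(n) \\ \forall i}} \lambda_{\mathbf d}\right)^2, \qquad \lambda_{\mathbf d} = \mu(d_1\cdots d_k)\, F\!\left(\tfrac{\log d_1}{\log R},\dots,\tfrac{\log d_k}{\log R}\right),
\end{equation*}
supported on $\prod_i d_i \le R = N^{\vartheta - \epsilon}$, and optimise the smooth cut-off $F$ so as to maximise
\begin{equation*}
\rho(F, k, \vartheta) \;:=\; \frac{\sum_{N<n\le 2N} w(n)\sum_{i=1}^k \mathbf{1}_{\mathbb{P}}(L_i(n))}{\sum_{N<n\le 2N} w(n)}.
\end{equation*}
Evaluating numerator and denominator reduces to equidistribution of primes in arithmetic progressions to level $\vartheta$ for each $L_i$. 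With Bombieri--Vinogradov ($\vartheta = 1/2$) this already yields $\rho \to \infty$ as $k \to \infty$, and assuming Elliott--Halberstam ($\vartheta$ arbitrarily close to $1$) would push $\rho$ yet further.

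\emph{Stage 3: Breaking the parity barrier.} The decisive step, on which the whole plan stands or falls, is to force $\rho > k-1$: combined with the trivial upper bound $\rho \le k$ and the integrality of $\sum_i \mathbf{1}_{\mathbb{P}}(L_i(n))$, this would guarantee that \emph{all} $L_i(n)$ are simultaneously prime on a positive-density set of $n$. Selberg's parity principle is the fundamental obstruction: a sieve using only congruence information to moduli below $N$ cannot distinguish the prime case $L_i(n) \in \mathbb{P}$ from the three-factor near-prime case $L_i(n) = p_1 p_2 p_3$. To break parity I would have to inject an auxiliary Type~II bilinear input --- nontrivial cancellation in sums $\sum_{m \sim M}\sum_{n \sim N/M} \alpha_m \beta_n\, \Lambda(L_i(mn))$ in a range of $M$ inaccessible to combinatorial identities --- and couple it coherently to the multidimensional sieve weights so that the near-prime contribution can be subtracted out. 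Such bilinear estimates are known for a few highly special single forms (Friedlander--Iwaniec, Heath-Brown), but producing them for an arbitrary admissible tuple of linear forms, and integrating them with a $k$-dimensional sieve, is the central unresolved difficulty of the conjecture.
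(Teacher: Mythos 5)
This statement is a \emph{conjecture}, not a theorem of the paper; the paper explicitly remarks that ``with the current technology it appears impossible to prove any case of the prime $k$-tuples conjecture for $k\ge 2$,'' and then proceeds to prove something strictly weaker, namely that $\Pi(n)=\prod_i L_i(n)$ has at most $r_k$ prime factors infinitely often. There is therefore no proof in the paper for your proposal to be measured against, and your proposal is not a proof either.

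The concrete gap is your Stage~3, and you have already named it yourself. Establishing
\begin{equation*}
\rho(F,k,\vartheta)>k-1
\end{equation*}
is not a step one carries out but precisely the content of the conjecture in this weighted form, and nothing in the sieve machinery of Stages~1--2 comes close. With Bombieri--Vinogradov the Maynard--Tao optimisation gives $\rho\gg\log k$, and even under the full Elliott--Halberstam hypothesis one only reaches $\rho$ slightly above $1$ for specific small tuples, not $\rho>k-1$ for a general admissible $k$-tuple. The ``auxiliary Type~II bilinear input'' you invoke to break parity is exactly what is unavailable for a generic linear form $an+b$: the Friedlander--Iwaniec and Heath-Brown arguments exploit very special algebraic structure (sums of a square and a fourth power, values of a fixed binary cubic form) that has no analogue here. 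Absent that input, your Stage~3 is a restatement of the problem, not a resolution of it. What the paper actually does is replace the unattainable goal of forcing all $L_i(n)$ prime with the attainable one of bounding $\Omega(\Pi(n))$: it keeps a $\Lambda^2$-type weight but multiplies it by $\nu-\sum_{p\mid\Pi(n)}\bigl(1-\log p/\log R_1\bigr)$ rather than by $\sum_i\mathbf{1}_{\mathbb{P}}(L_i(n))$, and then extracts extra savings by isolating, via the correction terms $\chi_r$, the contribution of those $n$ for which some $L_j(n)$ has exactly $r$ prime factors. That route deliberately sidesteps parity and delivers Theorem~\ref{thrm:MainTheorem} rather than the conjecture itself.

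Two smaller points. In Stage~1, the relation you want is that the sieve gives an upper bound of order $\mathfrak{S}(\mathcal{L})N/(\log N)^k$, but the conjectural asymptotic is $\sim\mathfrak{S}(\mathcal{L})N\prod_i a_i^{-1}/(\log N)^k$; the upper-bound sieve does not ``pin down the expected size'' so much as confirm the order of magnitude, and the obstruction is indeed entirely on the lower-bound side, as you say. In Stage~3, the pigeonhole step from $\rho>k-1$ and the integrality of $\sum_i\mathbf{1}_{\mathbb{P}}(L_i(n))\in\{0,\dots,k\}$ does yield the existence of some $n$ with all $L_i(n)$ prime, but ``positive density'' does not follow without a quantitative margin; this is a minor issue dwarfed by the parity obstruction.
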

With the current technology it appears impossible to prove any case of the prime $k$-tuples conjecture for $k\ge 2$.

Although we cannot prove that the functions are simultaneously prime infinitely often, we are able to show that they are \textit{almost prime} infinitely often, in the sense that their product has only a few prime factors. This was most notably achieved by Chen \cite{Chen} who showed that there are infinitely many primes $p$ for which $p+2$ has at most 2 prime factors. His method naturally generalises to show that for a pair of admissible functions the product $L_1(n)L_2(n)$ has at most 3 prime factors infinitely often.

Similarly sieve methods can prove analogous results for any $k$. We can show that the product of $k$ admissible functions $\Pi(n):=L_1(n)\dots L_k(n)$ has at most $r_k$ prime factors infinitely often, for some explicitly given value of $r_k$. We see that the prime $k$-tuples conjecture is equivalent to showing we can have $r_k=k$ for all $k$. The current best values of $r_k$ grow asymptotically like $k\log{k}$ and explicitly for small $k$ we can take $r_2=3$ (Chen, \cite{Chen}), $r_3=8$ (Porter, \cite{Porter}), $r_4=12$, $r_5=16$, $r_6=20$ (Diamond and Halberstam \cite{DiamondHalberstam}), $r_7=24$, $r_8=28$, $r_9=33$, $r_{10}=38$ (Ho and Tsang, \cite{HoTsang}). Heath-Brown \cite{HeathBrown} showed that infinitely often there are $k$-tuples where all the functions $L_i$ have individually at most $C\log{k}$ prime factors, for an explicit constant $C$.

A different approach was taken by Goldston, Pintz and Y\i ld\i r\i m \cite{GPY} in their work on small gaps between primes. Under the Elliot-Halberstam conjecture, they showed that there are infinitely many $n$ for which at least two of $n$, $n+4$, $n+6$, $n+10$, $n+12$, $n+16$ are prime. Thus there must be at least one specific $2$-tuple where both functions are prime infinitely often if the Elliot-Halberstam conjecture holds.
\section{Statement of Results}
Our main result is
\begin{thrm}\label{thrm:MainTheorem}
Given a set of $k$ admissible linear functions, for infinitely many $n\in\mathbb{N}$ the product $\Pi(n)$ has at most $r_k$ prime factors, where $r_k$ is given in Table \ref{Table:MainResultsTable} below.
\begin{table}[h!b]
\begin{center}
\caption{Bounds for $\Omega(\Pi(n))$}
\label{Table:MainResultsTable}
\begin{tabular}{|c|c c c c c c c c|}
\hline
$k$ & 3 & 4 & 5 & 6 & 7 & 8 & 9 & 10\\
\hline
$r_k$ & 8 & 11 & 15 & 18 & 22 & 26 & 30 & 34\\
\hline
\end{tabular}
\end{center}
\end{table}
\end{thrm}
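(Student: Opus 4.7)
The overall strategy is a weighted sieve argument in the tradition of Kuhn, Richert, Greaves and Diamond--Halberstam. I would fix an admissible $k$-tuple $\{L_1,\dots,L_k\}$ and seek $n\in(N,2N]$ with $\Omega(\Pi(n))\le r_k$ by studying a sum of the shape
\[
S(N)=\sum_{N<n\le 2N} W(n),
\]
in which $W(n)$ is a linear combination of sieve-type weights arranged so that $W(n)>0$ forces $\Omega(\Pi(n))\le r_k$. The task then reduces to establishing $S(N)>0$ for all sufficiently large $N$, which immediately yields infinitely many $n$ with the desired property.

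Concretely, I would fix sifting parameters $z<y\le N^{\theta}$ with $\theta$ close to $\tfrac12$, restrict to $n$ for which $\Pi(n)$ is coprime to $\prod_{p<z}p$, and use a Richert-type weight
\[
W(n)=1-\sum_{i=1}^k\sum_{\substack{z\le p<y\\ p\mid L_i(n)}}\lambda_{i,p}-R(n),
\]
where $R(n)$ is the new weight introduced in this paper, presumably exploiting finer information about the joint prime-factor structure of the $L_i(n)$ (or a higher-order correction with no analogue in previous weighted sieves). A standard counting argument then shows that on any $n$ sifted by $\prod_{p<z}p$ with $W(n)>0$, the number of prime factors of $\Pi(n)$ in $[z,y)$ is forced to be small enough that, combined with the trivial upper bound on the number of prime factors $\ge y$ coming from $\Pi(n)\ll N^k$, one deduces $\Omega(\Pi(n))\le r_k$ as soon as $z$, $y$ and the $\lambda_{i,p}$ satisfy a suitable numerical relation.

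The analytical core is to evaluate each of the three pieces of $S(N)$ asymptotically. The leading constant is handled by the $k$-dimensional lower bound $\beta$-sieve of Diamond--Halberstam applied to the sift of $\Pi(n)$ by primes below $z$; the Richert sum is handled by swapping the order of summation, fixing a prime $p\mid L_i(n)$, and then applying the $k$-dimensional upper bound sieve to the resulting counting problem for each $i,p$. The level of distribution one needs is $N^{\theta}$ with $\theta<1$, which is available from the Bombieri--Vinogradov theorem applied to each $L_i$ individually. The new weight $R(n)$ has to be designed so that its mean is likewise expressible (and bounded) in terms of the standard sifting functions $f_k$ and $F_k$, without requiring distributional inputs stronger than Bombieri--Vinogradov.

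The main obstacle will be the joint optimisation producing the specific values of $r_k$ recorded in Table \ref{Table:MainResultsTable}. One must simultaneously choose $z$, $y$, the coefficients $\lambda_{i,p}$ and the internal parameters of $R(n)$ so as to minimise $r_k$ subject to $S(N)\ge cN(\log N)^{-k}$ for some $c>0$, using the most recent numerical values of $f_k$ and $F_k$. For each $k\in\{3,\dots,10\}$ this is a genuinely nontrivial variational and numerical problem, and because all the explicit gains over the earlier tables of Diamond--Halberstam and Ho--Tsang must come from the contribution of $R(n)$, one needs to quantify the improvement provided by the new weight precisely enough to verify each individual entry of the table.
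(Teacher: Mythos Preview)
Your proposal misidentifies both the underlying sieve and the key new idea. The paper does \emph{not} use the Diamond--Halberstam--Richert $\beta$-sieve with sifting functions $f_k,F_k$; it uses a Selberg $\Lambda^2$-weight $\bigl(\sum_{d\mid\Pi(n)}\lambda_d\bigr)^2$ with $\lambda_d$ parametrised by a polynomial $P$ in the GPY style, and the sum studied is $\sum_{N\le n\le 2N}(c-\Omega(\Pi(n)))(\sum\lambda_d)^2$. Your framework of sifting by primes below $z$, then attaching Richert weights $\lambda_{i,p}$ for $z\le p<y$, is exactly the apparatus behind the Diamond--Halberstam and Ho--Tsang bounds that this paper \emph{improves upon}; indeed the author explicitly remarks that a DHR-based argument is deferred to a separate paper (for $k=3$).

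More importantly, your placeholder weight $R(n)$ hides the actual innovation. The loss in the classical weighted sieve comes from the inequality $\Omega(m)\le\sum_{p\mid m,\,p\le y}(1-\tfrac{\log p}{\log y})+\tfrac{\log m}{\log y}$, which is strict whenever $m$ has a prime factor exceeding $y$. The paper replaces this by the \emph{equality}
\[
\Omega(m)=\sum_{\substack{p\mid m\\p\le y}}\Bigl(1-\tfrac{\log p}{\log y}\Bigr)+\tfrac{\log m}{\log y}+\sum_{r\ge1}\chi_r(m),
\]
where $\chi_r(m)$ is a signed weight supported on integers $m=p_1\cdots p_r$ with $p_r>y$. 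Each $\chi_r$ contributes negatively, so truncating at $r\le h$ still gives a lower bound, and the gain comes from evaluating $\sum_n\chi_r(L_j(n))(\sum\lambda_d)^2$ for small $r$ via the Graham--Goldston--Pintz--Y\i ld\i r\i m machinery for products of exactly $r$ primes (using Motohashi's Bombieri--Vinogradov theorem for $E_r$-numbers). The output is a ratio of explicit polynomial integrals $J,J_0,J_1,J_2,J_3$ depending on $P$, and one optimises over $P$, not over sifting levels and functions $f_k,F_k$. Without naming this $\chi_r$/GGPY mechanism, your sketch cannot recover the table: the DHR framework you describe, even with an unspecified extra weight, is precisely what yields the old values $12,16,20,\dots$ rather than $11,15,18,\dots$.
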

Theorem \ref{thrm:MainTheorem} improves the previous best known bounds for $k\ge 4$, which were obtained by Diamond and Halberstam \cite{DiamondHalberstam} for $4\le k\le 6$ and by Ho and Tsang \cite{HoTsang} for $7\le k\le 10$. We fall just short of proving $r_k\le 7$ for $k=3$, and so fail to improve upon a result of Porter \cite{Porter}. This comparison is shown in Table \ref{Table:ComparisonTable}.
\begin{table}[h]
\begin{center}
\caption{Bounds for $\Omega(\Pi(n))$}
\label{Table:ComparisonTable}
\begin{tabular}{|c|cccccccc|}
\hline
$k$ & 3 & 4 & 5 & 6 & 7 & 8 & 9 & 10\\
\hline
Previous best bound & 8 & 12 & 16 & 20 & 24 & 28 & 33 & 38\\
New bound & 8 & 11 & 15 & 18 & 22 & 26 & 30 & 34\\
\hline
\end{tabular}
\end{center}
\end{table}
We prove these results using a sieve which is a combination of a weighted sieve similar to Selberg's $\Lambda^2\Lambda^-$ sieve (see \cite{Selberg}), and the Graham-Goldston-Pintz-Y\i ld\i r\i m  sieve (see \cite{GGPY}) used to count numbers with a specific number of prime factors. 

We note that for $k$ large our method only improves lower order terms, and so we do not improve the asymptotic bound $r_k\sim k\log{k}$.

In a forthcoming paper \cite{Maynard}, we will also improve the bound when $k=3$, using an argument based on the Diamon-Halberstam-Richert sieve rather than Selberg's sieve.
\section{Key Ideas}
We wish to show that for any sufficiently large $N$ we have
\begin{equation}\label{eq:MainSum}
\sum_{N\le n\le 2N}(c-\Omega(\Pi(n)))\left(\sum_{d|\Pi(n)}\lambda_d\right)^2>0
\end{equation}
for some real numbers $\lambda_d$ and some constant integer $c>0$. From this it is clear that there must be some $n\in[N,2N]$ such that $\Omega(\Pi(n))\le c$. Since this is true for all sufficiently large $N$, it follows that there are infinitely many integers $n$ such that $\Omega(\Pi(n))\le c$.

The work of Heath-Brown \cite{HeathBrown} and Ho and Tsang \cite{HoTsang} considered a similar sum, but used the divisor function $d(\Pi(n))$ instead of the number-of-prime-factors function $\Omega$. Using the divisor function has the advantage that there are stronger level-of-distribution results available, but we find that this is outweighed by the fact that the $\Omega$ function is relatively much smaller than the divisor function on numbers with many prime factors.

The $\Omega$ function has Bombieri-Vinogradov style equidistribution results (as shown by Motohashi \cite{Motohashi}), and so we would expect we should be able to estimate the above sum directly, in a method similar to Heath-Brown \cite{HeathBrown} or Selberg \cite{Selberg} when they considered the divisor function instead. We encounter some technical difficulties when attempting to translate this argument, however.

Instead we express $\Omega(n)$ as a weighted sum over small prime factors (as in the weighted sieve method of Diamond and Halberstam \cite{DiamondHalberstam}) and a remaining positive contribution which we split up depending on the number of prime factors of each of the $L_j(n)$. 

Diamond and Halberstam used a weighted sieve. The method relied on the fact for $n$ square-free we have the inequality
\begin{equation}
\Omega(n)\le \sum_{\substack{p|n\\ p\le y}}\left(1-\frac{\log{p}}{\log{y}}\right)+\frac{\log{n}}{\log{y}}.
\end{equation}
We note that this inequality is strict if $n$ has a prime factor which is larger than $y$. This results in a loss in the argument which has a noticeable effect when we apply this to $k$-tuples when $k$ is small. Assuming that $y\ge n^{1/2}$ and $n$ square-free we can write instead an equality
\begin{equation}
\Omega(n)=\sum_{\substack{p|n\\ p\le y}}\left(1-\frac{\log{p}}{\log{y}}\right)+\frac{\log{n}}{\log{y}}+\sum_{r=1}^\infty \chi_r(n),
\end{equation}
where
\begin{align}
\chi_r(n)&=\begin{cases}
1-\frac{\log{p_r}}{\log{y}},\qquad&n=p_1\dots p_r\text{ with }p_1\le p_2\le \dots\le p_{r-1}\text{ and }y< p_r,\\
0,&\text{otherwise,}
\end{cases}\nonumber\\
&=\begin{cases}
-\left(\frac{\log{n}}{\log{y}}-1-\sum_{i=1}^{r-1}\frac{\log{p_i}}{\log{y}}\right),\qquad &n=p_1\dots p_r\text{ with }p_1\le p_2\le \dots\le p_r\\
&\qquad\text{and }y< p_r,\\
0,&\text{otherwise.}
\end{cases}
\end{align}
For fixed $r$ we can evaluate Selberg-type weighted sums over $\chi_r(L_i(n))$ using the method of Graham, Goldston, Pintz and Y\i ld\i r\i m in \cite{GGPY} as an extension of the original GPY method. We note that the contribution from $\chi_r(n)$ is always negative, so we can obtain a lower bound by simply omitting terms when $r>h$ for some constant $h$. The contribution of the $\chi_r$ terms decreases quickly with $r$, and so we in practice only need to calculate the contribution when $r$ is small (in this paper we only consider the contributions of $\chi_r$ when $r\le 4$). This is the key difference in our approach to previous methods, and allows us to obtain the improvements given by Theorem \ref{thrm:MainTheorem}.
\newpage
\section{Initial Considerations}
We adopt similar notation to that of Graham, Goldston, Pintz and Y\i ld\i r\i m in \cite{GGPY}.

Let $\mathcal{L}=\{L_1,L_2,\dots,L_k\}$ be an admissible $k$-tuple of linear functions. We define
\begin{align}
\Pi(n)&=\prod_{i=1}^k L_i(n)=(a_1n+b_1)\dots(a_k n+b_k),\\
\nu_p(\mathcal{L})&=\#\{1\le n\le p:\Pi(n)\equiv 0\pmod{p}\}.
\end{align}
We note that admissibility is equivalent to the condition
\begin{equation}\nu_p(\mathcal{L})<p\qquad \text{for all primes $p$.}\end{equation}
We also see that $v_p(\mathcal{L})\le k$ for all primes $p$, and so the above condition holds automatically for $p>k$.

For technical reasons we adopt a normalisation of our linear functions, as done originally by Heath-Brown in \cite{HeathBrown}. Since we are only interested in the showing any admissible $k$-tuple has at most $r_k$ prime factors infinitely often (for some explicit $r_k$), by considering the functions $L_i(An+B)$ for suitably chosen constants $A$ and $B$, we may assume without loss of generality that our functions satisfy the following hypothesis.
\begin{hypthss}\label{hypthss:Normalised}
$\mathcal{L}=\{L_1,\dots,L_k\}$ is an admissible $k$-tuple of linear functions. The functions $L_i(n)=a_i n+b_i$ ($1\le i \le k$) are distinct with $a_i>0$. Each of the coefficients $a_i$ is composed of the same primes, none of which divides the $b_j$. If $i\ne j$, then any prime factor of $a_i b_j-a_j b_i$ divides each of the $a_l$.
\end{hypthss}
For a set of linear functions satisfying Hypothesis \ref{hypthss:Normalised} we define
\begin{equation}
A=\prod_{i=1}^k a_i.
\end{equation}
We note that in this case
\begin{equation}
\nu_p(\mathcal{L})=\begin{cases}
0,\qquad &p|A,\\
k, &p\nmid A.
\end{cases}
\end{equation}
We also define the \textit{singular series} $\mathfrak{S}(\mathcal{L})$ of $\mathcal{L}$ when $\mathcal{L}$ satisfies Hypothesis \ref{hypthss:Normalised}.
\begin{equation}
\mathfrak{S}(\mathcal{L})=\prod_{p|A}\left(1-\frac{1}{p}\right)^{-k}\prod_{p\nmid A}\left(1-\frac{k}{p}\right)\left(1-\frac{1}{p}\right)^{-k}.
\end{equation}
We note that $\mathfrak{S}(\mathcal{L})$ is positive.

As is common with the Selberg sieve, for some parameter $R_2$ we impose the condition
\begin{equation}\label{eq:LambdaConds}
\lambda_d=0\qquad\text{if $d\ge R_2$ or $d$ not square-free or $(d,A)\ne 1$.}
\end{equation}
We wish to choose the $\lambda_d$ to maximize the sum \eqref{eq:MainSum}, but this will be difficult to do optimally. We proceed by reparameterising the form in $\lambda_d$ into new variables $y_r$ and $y_r^*$ which will almost diagonalise it. We define
\begin{align}
y_r=\mu(r)f_1(r)\sideset{}{'}{\sum}_{d}\frac{\lambda_{dr}}{f(dr)},\\
y_r^*=\mu(r)f^*_1(r)\sideset{}{'}{\sum}_{d}\frac{\lambda_{dr}}{f^*(dr)},
\end{align}
where here and from now on, the $'$ by the summation indicates that the sum is over all values of the indices which are square-free and coprime to $A$. For square-free $d$ coprime to $A$, the functions $f$, $f_1$, $f^*$ and $f_1^*$ are defined by
\begin{align}
f(d)&=\prod_{p|d}\frac{p}{k},\\
f_1(d)&=(f*\mu)(d)=\prod_{p|d}\frac{p-k}{k},\\
f^*(d)&=\prod_{p|d}\frac{p-1}{k-1},\\
f_1^*(d)&=(f^**\mu)(d)=\prod_{p|d}\frac{p-k}{k-1}.
\end{align}
We note that by M\"obius inversion we have
\begin{equation}\label{eq:LambdaDef}
\lambda_d=\mu(d)f(d)\sideset{}{'}{\sum}_r\frac{y_{rd}}{f_1(rd)}.
\end{equation}
Thus the $\lambda_d$ (and hence also the $y_r^*$) are defined uniquely by a choice of the $y_r$. The conditions \eqref{eq:LambdaConds} will be satisfied if the same conditions apply to the $y_r$.

For some polynomial $P$ (to be determined later), we choose
\begin{equation}\label{eq:YDef}
y_r=\begin{cases}
\mu^2(r)\mathfrak{S}(\mathcal{L})P\left(\frac{\log{R_2/r}}{\log{R_2}}\right),\qquad &\text{if $r\le R_2$ and $(r,A)=1$,}\\
0,&\text{otherwise.}
\end{cases}
\end{equation}
We now turn our attention to the proof of the theorem.
\section{Proof of Theorem}
We consider the sum
\begin{equation}S=S(\nu;N,R_1,R_2,\mathcal{L})=\sum_{N\le n \le 2N}w(n)\Lambda^2(n),\end{equation}
where
\begin{align}
w(n)&=\nu-\sum_{p|\Pi(n)}\left(1-\frac{\log{p}}{\log{R_1}}\right),\\
\Lambda^2(n)&=\Big(\sum_{\substack{d|\Pi(n)\\ d\le R_2}}\lambda_d\Big)^2.
\end{align}
We note that if $\Pi(n)$ is square-free then
\begin{equation}
w(n)=\nu-\Omega(\Pi(n))+\frac{\log{\Pi(n)}}{\log{R_1}}.
\end{equation}
We see that for $n\in [N,2N]$ and some fixed $h\in\mathbb{Z}_{>0}$ we have
\begin{align}
w(n)&=\nu-\sum_{j=1}^k\sum_{p|L_j(n)}\left(1-\frac{\log{p}}{\log{R_1}}\right)\nonumber\\
&\ge \nu-\sum_{j=1}^k\sum_{\substack{p|L_j(n)\\p\le R_1\text{ or }\Omega(L_j(n))\le h}}\left(1-\frac{\log{p}}{\log{R_1}}\right)\nonumber\\
&\ge\nu-\sum_{j=1}^k\sum_{\substack{p|L_j(n)\\p\le R_1}}\left(1-\frac{\log{p}}{\log{R_1}}\right)+\sum_{j=1}^k\sum_{r=1}^{h}\chi_r(L_j(n)),
\end{align}
where
\begin{equation}
\chi_r(n)=\begin{cases}
\frac{\log{N}}{\log{R_1}}-1-\sum_{i=1}^{r-1}\frac{\log{p_i}}{\log{R_1}}, &\text{if }n=p_1\dots p_r\text{ with }\\
&\quad n^\epsilon<p_1<\dots<p_{r-1}\le n^{\log{R_1}/\log{N}}<p_r\\
0,&\text{otherwise.}
\end{cases}
\end{equation}
Thus
\begin{align}
\sum_{\substack{N\le n\le 2N\\ \Pi(n)\text{ square-free}}}\left(\nu-\Omega(\Pi(n))+\frac{\log{\Pi(n)}}{\log{R_1}}\right)\Lambda^2(n)&= S-S'\nonumber\\
&\ge vS_0-S'-T_0+\sum_{j=1}^k \sum_{r=1}^h T_{r,j},
\end{align}
where
\begin{align}
S_0&=\sum_{N\le n\le 2N}\Lambda^2(n),\\
S'&=\sum_{\substack{N\le n\le 2N\\\Pi(n)\text{ not square-free}}}w(n)\Lambda^2(n),\\
T_0&=\sum_{N\le n\le 2N}\sum_{\substack{p|\Pi(n)\\p\le R_1}}\left(1-\frac{\log{p}}{\log{R_1}}\right)\Lambda^2(n),\\
T_{r,j}&=\sum_{N\le n\le 2N}\chi_r(L_j(n))\Lambda^2(n).
\end{align}
We can evaluate $S_0$, $S'$, $T_0$ and $T_i$ using weighted forms of the Selberg sieve. We state the results here and prove them in the following sections. To ease notation we now fix as constants
\begin{equation}
r_1=\frac{\log{R_1}}{\log{N}},\qquad r_2=\frac{\log{R_2}}{\log{N}}.
\end{equation}
We view $r_1$, $r_2$, $k$, $A$ and our polynomial $P$ as fixed, and so any constants implied by the use of $O$ or $\ll$ notation may depend on these quantities without explicit reference.
\newpage
\begin{prpstn}\label{prpstn:MainTermResult}\label{prpstn:T0}
Let $\mathcal{L}$ satisfy Hypothesis \ref{hypthss:Normalised}. Let $W_0:[0,r_1/r_2]:\rightarrow \mathbb{R}_{\ge 0}$ be a piecewise smooth non-negative function. Let $\lambda_d, y_d$ be as given in \eqref{eq:LambdaDef} and \eqref{eq:YDef}. Assume that $r_1\ge r_2$. Then there exists a constant $C$ such that if $R_1R_2^2\le N(\log{N})^{-C}$ then we have
\begin{align*}
\sum_{N\le n\le 2N}\left(\sum_{\substack{p|\Pi(n)\\p\le R_1}}W_0\left(\frac{\log{p}}{\log{R_2}}\right)\right)\left(\sum_{\substack{d|\Pi(n)\\d\le R_2}}\lambda_d\right)^2&=\frac{\mathfrak{S}(\mathcal{L})N(\log{R_2})^{k}}{(k-1)!}J_0\\
&\qquad+O_{W_0}\left(N(\log{N})^{k-1}(\log{\log{N}})^2\right),
\end{align*}
where
\begin{align*}
J_0&=J_{01}+J_{02}+J_{03},\\
J_{01}&=k\int_0^{1}\frac{W_0(y)}{y}\int_0^{1-y}(P(1-x)-P(1-x-y))^2 x^{k-1}dx dy,\\
J_{02}&=k\int_0^{1}\frac{W_0(y)}{y}\int_{1-y}^1 P(1-x)^2 x^{k-1}dx dy,\\
J_{03}&=k\int_{1}^{r_1/r_2}\frac{W_0(y)}{y}\int_0^{1}P(1-x)^2x^{k-1}dx dy.
\end{align*}
\end{prpstn}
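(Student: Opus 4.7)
I would interchange summation so the prime $p$ sits outermost, expand $\Lambda^2$, and reduce the inner count to an arithmetic-progression estimate for the modulus generated by $[d_1,d_2]$ and $p$; after substituting \eqref{eq:LambdaDef} to diagonalise the quadratic form, partial summation in $r$ and in $p$ converts the resulting sums into the three integrals $J_{01}, J_{02}, J_{03}$. Writing $x = \log r/\log R_2$ and $y = \log p/\log R_2$, these three pieces will correspond to the regions $(x\le 1-y,\, y\le 1)$, $(x > 1-y,\, y\le 1)$, and $(y > 1)$ respectively.

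\textbf{Main calculation.} After expanding the square and swapping summation,
\[
\mathrm{LHS}=\sum_{p\le R_1}W_0\!\left(\tfrac{\log p}{\log R_2}\right)\sum_{j=1}^{k}\sum_{d_1,d_2}\lambda_{d_1}\lambda_{d_2}\,\#\{N\le n\le 2N:[d_1,d_2]\mid\Pi(n),\ p\mid L_j(n)\}.
\]
The inner cardinality is an arithmetic-progression count with main term $N\nu_{q}(\mathcal{L})/q$, where $q = p[d_1,d_2]$ if $p\nmid[d_1,d_2]$ and $q=[d_1,d_2]$ otherwise. Summing the factor $\nu_p(\mathcal{L}_j)$ over $j$ gives $\nu_p(\mathcal{L})=k$ for $p\nmid A$, while the $O(1)$ primes $p\mid A$ are absorbed into the error. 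Substituting \eqref{eq:LambdaDef} into the sum over $(d_1,d_2)$: for $p\nmid[d_1,d_2]$ one recovers the standard Selberg diagonal sum $\mathfrak{S}(\mathcal{L})^2\sideset{}{'}{\sum}_r\mu^2(r)y_r^2/f_1(r)$ times $k/p$, while for $p\mid[d_1,d_2]$ one isolates the prime $p$ inside both $\lambda_{d_1}$ and $\lambda_{d_2}$ and obtains the shifted form $\mathfrak{S}(\mathcal{L})^2\sideset{}{'}{\sum}_r\mu^2(r)(y_r-y_{pr})^2/f_1(r)$, with $y_{pr}$ interpreted as $0$ when $pr > R_2$.

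\textbf{Passage to integrals and error terms.} Using $y_r=\mathfrak{S}(\mathcal{L})P(1-x)$ together with the standard asymptotic $\sideset{}{'}{\sum}_{r\le X}\mu^2(r)/f_1(r)\sim c_k(\log X)^{k-1}$, partial summation converts the $r$-sum into an integral with weight $x^{k-1}$. For $p\le R_2$ the range splits at $x=1-y$: on $[0,1-y]$ we have $pr\le R_2$, so $(y_r-y_{pr})^2=\mathfrak{S}(\mathcal{L})^2(P(1-x)-P(1-x-y))^2$, producing $J_{01}$ once combined with the $p\nmid[d_1,d_2]$ contribution; on $[1-y,1]$ we have $y_{pr}=0$, leaving $\mathfrak{S}(\mathcal{L})^2 P(1-x)^2$ and yielding $J_{02}$. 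For $p>R_2$ the divisibility $p\mid[d_1,d_2]$ is impossible and the full range $x\in[0,1]$ contributes $P(1-x)^2$, yielding $J_{03}$. A Mertens-type partial summation $\sum_p W_0(\log p/\log R_2)/p\to\int W_0(y) y^{-1}\,dy$ then assembles the three integrals, and the Selberg-sieve normalisation provides the prefactor $\mathfrak{S}(\mathcal{L})N(\log R_2)^{k}/(k-1)!$. The cumulative error from the arithmetic-progression remainders is $\sum_{p\le R_1}\sum_{d_1,d_2\le R_2}|\lambda_{d_1}\lambda_{d_2}|\nu_{p[d_1,d_2]}(\mathcal{L})\ll R_1R_2^2(\log N)^{O_k(1)}$, using the standard bound $|\lambda_d|\ll(\log R_2)^k$; this is admissible under $R_1R_2^2\le N(\log N)^{-C}$ for sufficiently large $C$.

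\textbf{Main obstacle.} The principal technical difficulty will be the $p\mid[d_1,d_2]$ case: extracting $p$ from inside both $\lambda_{d_1}$ and $\lambda_{d_2}$ to produce the clean shifted variable $y_r-y_{pr}$ requires careful combinatorial bookkeeping with the nested sum in \eqref{eq:LambdaDef}, and the observation that the boundary $pr=R_2$ splits the $r$-integral exactly into the $J_{01}/J_{02}$ decomposition is what keeps the final answer clean. A secondary technical point is propagating the mere piecewise-smoothness hypothesis on $W_0$ through the Mertens-type step uniformly, which is what costs the $(\log\log N)^2$ factor in the stated error term.
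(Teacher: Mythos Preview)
Your overall strategy matches the paper's: interchange summation to put $p$ outermost, reduce the count over $n$ to its main term $N/f([d,e,p])$ plus a remainder bounded trivially under $R_1R_2^2\le N(\log N)^{-C}$, diagonalise the resulting bilinear form in $\lambda_d$, and finish with a Mertens-type partial summation in $p$. The paper packages the middle step as $N\sum_{p\le R_1}W_0(\cdot)\,T_p/f(p)$ with $T_p$ as in \eqref{eq:Tdef}, invokes Lemma~\ref{lmm:TResult} to write $T_p=(\log R_2)^k\mathfrak{S}(\mathcal{L})\,I_0(\log p/\log R_2)/(k-1)!+O(\cdots)$, and then Lemma~\ref{lmm:PrimeSummation} to convert the $p$-sum to $k\int_0^{r_1/r_2}W_0(y)I_0(y)\,y^{-1}\,dy$, from which $J_{01},J_{02},J_{03}$ fall out by splitting $I_0$ according to where $P^+(1-t-y)$ vanishes.

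There is, however, a genuine gap in your diagonalisation. You split on whether $p\mid[d_1,d_2]$ and assert that each piece separately diagonalises: the case $p\nmid[d_1,d_2]$ to $(k/p)\sum_r y_r^2/f_1(r)$ and the case $p\mid[d_1,d_2]$ to $\sum_r(y_r-y_{pr})^2/f_1(r)$. Neither is correct. The constraint $p\nmid d_1,d_2$ does not commute with the change of variables \eqref{eq:LambdaDef}, so the first piece is \emph{not} the unconstrained Selberg diagonal; and $p\mid[d_1,d_2]$ means $p$ divides $d_1$ \emph{or} $d_2$, not both, so ``isolating $p$ inside both $\lambda_{d_1}$ and $\lambda_{d_2}$'' misses the cross-terms. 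A quick sanity check shows the two forms you write down do not sum to the right thing: your second piece carries no factor $k/p$, yet $J_{02}$ visibly has the prefactor $k/y$. The correct statement (Lemma~\ref{lmm:Tres} with $\delta=p$) is that the \emph{combined} bilinear form diagonalises in one stroke,
\[
\sideset{}{'}{\sum}_{d,e}\frac{\lambda_d\lambda_e}{f([d,e,p])}=\frac{T_p}{f(p)}=\frac{k}{p}\sideset{}{'}{\sum}_{\substack{r\\(r,p)=1}}\frac{\mu^2(r)}{f_1(r)}(y_r-y_{rp})^2,
\]
and it is this single expression, not a case-split, that then breaks at $x=1-y$ into the $J_{01}$ and $J_{02}$ integrands (and reduces to $J_{03}$ when $p>R_2$ since then $y_{rp}\equiv 0$). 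Once you replace your two-case description by this formula, the rest of your outline goes through as written. As a minor side remark, the extra $\log\log N$ in the stated error arises from summing the $O((\log R_2)^{k-1}\log\log R_2)$ error for $T_p$ against $\sum_{p\le R_1}1/p$, not from the piecewise smoothness of $W_0$.
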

\begin{prpstn}\label{prpstn:ErResult}\label{prpstn:Ti}
Given $\epsilon>0$ and $r\in\mathbb{Z}_{>0}$, let
\begin{equation*}\mathcal{A}_r:=\left\{x\in[0,1]^{r-1}:\epsilon<x_1<\dots<x_{r-1},\sum_{i=1}^{r-1}x_i<\min(1-r_2,1-x_{r-1})\right\}.\end{equation*}
Let $W_r:[0,1]^{r-1}\rightarrow\mathbb{R}_{\ge 0}$ be a piecewise smooth function supported on $\mathcal{A}_r$ such that
\begin{equation*}\frac{\partial}{\partial x_j}W_r(x)\ll W_r(x)\qquad \text{uniformly for $x\in \mathcal{A}_r$.}\end{equation*}
Let
\begin{equation*}\beta_r(n)=\begin{cases}
W_r\left(\frac{\log{p_1}}{\log{n}},\dots,\frac{\log{p_{r-1}}}{\log{n}}\right),\qquad &\text{$n=p_1p_2\dots p_r$ with $p_1<\dots<p_r$,}\\
0,&\text{otherwise,}
\end{cases}\end{equation*}
Then there is a constant $C$ such that if $R_2^2\le N^{1/2}(\log{N})^{-C}$, we have
\begin{equation*}\sum_{N\le n\le 2N}\beta_r(L_j(n))\left(\sum_{\substack{d|\Pi(n)\\ d\le R_2}}\lambda_d\right)^2=\frac{\mathfrak{S}(\mathcal{L})N(\log{R_2})^{k+1}}{(k-2)!(\log{N})}J_r+O_{W_r}\left(N(\log\log{N})^{r}(\log{N})^{k-1}\right),\end{equation*}
where
\begin{align*}
J_r&=\int_{(x_1,\dots,x_{r-1})\in \mathcal{A}_r}\frac{ W_r(x_1,\dots,x_{r-1})I_1(r_2^{-1}x_1,\dots,r_2^{-1}x_{r-1})}{\left(\prod_{i=1}^{r-1}x_i\right)\left(1-\sum_{i=1}^{r-1}x_i\right)}d x_1\dots dx_{r-1},\\
I_1&=\int_0^1\left(\sum_{J\subset\{1,\dots,r-1\}}(-1)^{|J|}\tilde{P}^+(1-t-\sum_{i\in J}x_i) \right)^2t^{k-2}d t,\\
\tilde{P}^+(x)&=\begin{cases}
\int_0^x P(t)d t,\qquad &x\ge 0\\
0,&\text{otherwise.}
\end{cases}
\end{align*}
\end{prpstn}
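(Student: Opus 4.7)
My plan is to treat Proposition 4.2 as a two-layer sieve computation: the inner layer is a Bombieri--Vinogradov style estimate for primes in arithmetic progressions (one prime per $r$-almost-prime), and the outer layer is the usual Selberg reparameterisation via the $y_r$ variables. First I would expand $\Lambda^2(n)$ and write
\begin{equation*}
\sum_{N\le n\le 2N}\beta_r(L_j(n))\Lambda^2(n)
 = \sideset{}{'}{\sum}_{d_1,d_2}\lambda_{d_1}\lambda_{d_2}\,
 \sum_{\substack{N\le n\le 2N\\ [d_1,d_2]\mid \Pi(n)}}\beta_r(L_j(n)),
\end{equation*}
and then decompose $\beta_r(L_j(n))$ by fixing the first $r-1$ primes $p_1<\dots<p_{r-1}$ of $L_j(n)$ (which by the support of $W_r$ lie in $[N^{\epsilon},N^{r_1/r_2}]$) and letting $p_r=L_j(n)/(p_1\cdots p_{r-1})$ run over primes. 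After factoring out $m=p_1\cdots p_{r-1}$, counting $n$ with $[d_1,d_2]\mid\Pi(n)$ and $m\mid L_j(n)$ reduces, via the Chinese Remainder Theorem and Hypothesis \ref{hypthss:Normalised}, to primes $p_r$ in certain arithmetic progressions modulo a divisor of $a_j m [d_1,d_2]$.

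The inner prime count is handled by Bombieri--Vinogradov. Because $m \le N^{1-r_2+o(1)}$ on the support of $\beta_r$ and $[d_1,d_2]\le R_2^2$ with $R_2^2\le N^{1/2}(\log N)^{-C}$, the combined modulus stays well inside the Bombieri--Vinogradov range, so on average over $d_1,d_2$ we can replace the prime count by its expected value $\mathrm{li}(L_j(2N)/m)/\varphi(\cdots)$ with an acceptable error of size $N(\log N)^{-A}$; summing over $p_1,\dots,p_{r-1}$ produces the stated factor of $(\log\log N)^r$. What remains is the ``main term''
\begin{equation*}
\sideset{}{'}{\sum}_{d_1,d_2}\lambda_{d_1}\lambda_{d_2}\,
 \sum_{\substack{p_1<\dots<p_{r-1}\\ \text{size/coprimality}}}
  \frac{W_r(\cdots)}{\log(N/m)}\cdot \frac{g_j(a_jm,[d_1,d_2])\,N}{\varphi(\cdots)\,m},
\end{equation*}
where $g_j$ records the local densities coming from the combined divisibility conditions.

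At this point the argument becomes an ordinary Selberg--sieve computation with a perturbed local factor. Using the definition \eqref{eq:LambdaDef} to replace each $\lambda_d$ by its expression in terms of the $y_r$, I would change variables so that the quadratic form in $\lambda$ becomes almost diagonal in $y$, exactly as in \cite{GGPY}. The local factor $g_j$ introduces the arithmetic factor $(p-1)/(k-1)$ (rather than $(p-k)/k$) on the primes dividing $L_j$'s modulus, which is why $f^*,f_1^*$ were introduced above; this is how the factor $(\log R_2)^{k+1}/(k-2)!$ arises instead of $(\log R_2)^k/(k-1)!$, and produces the inner structure of $I_1$ through the square of the ``shifted'' antiderivative $\tilde P^+(1-t-\sum_{i\in J}x_i)$ summed with signs over subsets $J$.

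The last step is asymptotic evaluation. Writing the $y_r$ as the smooth function \eqref{eq:YDef} and applying the standard Perron/partial summation step that turns sums $\sideset{}{'}{\sum}_d F(\log R_2/\log d)/f(d)$ into integrals against $t^{k-2}\,dt$ (with the exponent $k-2$ arising because one of the $k$ local factors is absorbed into the $L_j$ congruence), I collect the $(x_1,\dots,x_{r-1})$--integration from the sum over $p_1,\dots,p_{r-1}$ via a prime number theorem estimate in logarithmic variables $x_i=\log p_i/\log N$, which supplies the $\prod x_i^{-1}$ and $(1-\sum x_i)^{-1}$ factors. Combining everything gives the main term $\mathfrak{S}(\mathcal{L})N(\log R_2)^{k+1}J_r/((k-2)!\log N)$ as claimed. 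The main obstacle I expect is bookkeeping: uniformly controlling the accumulated error terms over all the prime-size ranges and the two-dimensional Selberg indices $(d_1,d_2)$, while verifying that the off-diagonal contribution in the reparameterisation is genuinely absorbed by the stated error $N(\log\log N)^r(\log N)^{k-1}$.
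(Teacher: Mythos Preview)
Your overall architecture is right and matches the paper: open the square, isolate the contribution of each choice of small primes $p_1<\dots<p_{r-1}$, feed the large prime through a level-of-distribution result, then diagonalise via $f^*,f_1^*$ and convert everything to the integrals $I_1$ and $J_r$. The diagonalisation and asymptotic steps you sketch are exactly what the paper does (via its $T^*_\delta$ lemmas and partial summation).

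The gap is in your Bombieri--Vinogradov step. If you fix $m=p_1\cdots p_{r-1}$ first and then run $p_r=L_j(n)/m$ over primes, then $p_r$ lives in an interval of length $\asymp N/m$, while the modulus coming from $[d_1,d_2]\mid\Pi(n)$ is (generically) of size $[d_1,d_2]\le R_2^2$. The Bombieri--Vinogradov range for primes of size $N/m$ is $(N/m)^{1/2}(\log)^{-C}$, not $N^{1/2}(\log)^{-C}$. On the support of $W_r$ one has $m$ as large as $N^{1-r_2}$, so $(N/m)^{1/2}$ can be as small as $N^{r_2/2}=R_2^{1/2}$, which is far below $R_2^2$. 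Thus your assertion that ``the combined modulus stays well inside the Bombieri--Vinogradov range'' is false for large $m$, and the error term is not controlled.

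The paper avoids this by \emph{not} fixing $m$ before the equidistribution step. Instead it proves (Lemma~\ref{lmm:BombVino}) a weighted Bombieri--Vinogradov theorem for $E_r$-numbers directly, deducing it from Motohashi's theorem for $\#\{n\le x: n=p_1\cdots p_r\}$ in progressions. That result operates at length $N$ with moduli $q\le R_2^2\le N^{1/2}(\log N)^{-C}$, which is exactly the hypothesis of the proposition. Only \emph{after} this step, following Thorne, does the paper peel off $p_1,\dots,p_{r-1}$ to reach an expression $\sum^* T^*_{p_1\cdots p_{r-1}}/(p_1\cdots p_{r-1})\cdot\alpha(\cdots)$ which is then handled by the $T^*_\delta$ machinery and partial summation. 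So your plan is salvageable, but the ``inner prime count'' must be replaced by a genuine level-of-distribution input for products of $r$ primes (Motohashi), not by the ordinary Bombieri--Vinogradov theorem applied prime-by-prime.
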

\begin{prpstn}\label{prpstn:SquareFree}\label{prpstn:S'}
There exists a constant $C$ such that if $R_2^2\le N^{1/2}(\log{N})^{-C}$ then
\begin{equation*}\sum_{\substack{N\le n\le 2N\\\Pi(n)\textnormal{ not square-free}}}\left(\sum_{\substack{d|\Pi(n)\\d\le R_2}}\lambda_d\right)^2\ll N(\log{N})^{k-1}\log\log{N}.\end{equation*}
\end{prpstn}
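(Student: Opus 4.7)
The strategy is a union bound on the prime $p$ witnessing the non-squarefree divisor. Hypothesis~\ref{hypthss:Normalised} is used in two ways. First, if $p\mid A$ then $p\mid a_j$ but $p\nmid b_j$, so $p\nmid L_j(n)$ for every $n$ and every $j$; hence no prime dividing $A$ contributes. Second, if $p\nmid A$ and $p\mid L_i(n),\ p\mid L_j(n)$ with $i\ne j$, then $p\mid a_ib_j-a_jb_i$, forcing $p\mid A$ by the hypothesis, which is a contradiction. So for $p\nmid A$ the condition $p^2\mid\Pi(n)$ is equivalent to $p^2\mid L_j(n)$ for a unique $j$. Since $L_j(n)\ll N$ on our range, we additionally have $p\ll N^{1/2}$, and we obtain
\begin{equation*}
\sum_{\substack{N\le n\le 2N\\\Pi(n)\text{ not sq-free}}}\Bigl(\sum_{\substack{d|\Pi(n)\\ d\le R_2}}\lambda_d\Bigr)^2\;\le\;\sum_{j=1}^{k}\sum_{\substack{p\nmid A\\ p\ll \sqrt N}}\sum_{\substack{N\le n\le 2N\\ p^2|L_j(n)}}\Bigl(\sum_{\substack{d|\Pi(n)\\ d\le R_2}}\lambda_d\Bigr)^2.
\end{equation*}

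The inner sum I will estimate by expanding the square and interchanging, splitting into the ranges $p>R_2$ and $p\le R_2$. For $p>R_2$, every $d\le R_2$ is coprime to $p$, so the Chinese Remainder Theorem gives (using $p\nmid a_j$)
\begin{equation*}
\#\{N\le n\le 2N:p^2|L_j(n),\ [d_1,d_2]|\Pi(n)\}=\frac{N\,\nu_{[d_1,d_2]}(\mathcal L)}{p^2\,[d_1,d_2]}+O(\nu_{[d_1,d_2]}(\mathcal L)).
\end{equation*}
The main term factorises as $(N/p^2)$ times the standard Selberg quadratic form $\sum_{d_1,d_2}\lambda_{d_1}\lambda_{d_2}\nu_{[d_1,d_2]}/[d_1,d_2]\ll \mathfrak S(\mathcal L)(\log R_2)^k$; summing the geometric tail $\sum_{p>R_2}p^{-2}\ll (R_2\log R_2)^{-1}$ gives a contribution $\ll N(\log R_2)^{k-1}/R_2$, and the error terms are controlled under $R_2^2\le N^{1/2}(\log N)^{-C}$ in the usual way. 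For $p\le R_2$, the case $p\mid d_1$ or $p\mid d_2$ must be included. Writing $d=e$ or $d=pe$ (with $(e,p)=1$) and applying CRT in the same manner collapses all four cases into
\begin{equation*}
\frac{N}{p^2}\sum_{\substack{e_1,e_2\\ (e_1e_2,p)=1}}\tilde\lambda_{e_1}\tilde\lambda_{e_2}\,\frac{\nu_{[e_1,e_2]}(\mathcal L)}{[e_1,e_2]}+\text{(errors)},\qquad \tilde\lambda_e:=\lambda_e+\lambda_{pe}.
\end{equation*}
Using the definition \eqref{eq:LambdaDef} together with \eqref{eq:YDef}, one finds $\tilde\lambda_e=\mu(e)f(e)\sum_r'(y_{re}-y_{pre})/f_1(re)$, and from the smoothness of $P$ in \eqref{eq:YDef} the mean value theorem yields $|y_{re}-y_{pre}|\ll\mathfrak S(\mathcal L)(\log p)/(\log R_2)$. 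This produces a $(\log p/\log R_2)^2$ gain in the quadratic form, and the bound $\ll N(\log p)^2(\log R_2)^{k-2}/p^2$ results; summing against $\sum_p(\log p)^2/p^2=O(1)$ gives a total $\ll N(\log R_2)^{k-2}$, which is much better than claimed.

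\textbf{Main obstacle.} The delicate step is the small-prime case. Naively one gets only $\ll N(\log R_2)^k/p^2$ per prime, and $\sum_p p^{-2}$ is a (divergent at small primes) constant, yielding $O(N(\log R_2)^k)$, which is too large by a factor of $\log R_2$. The essential saving comes from the identification of $\tilde\lambda_e$ and the fact that for the specific polynomial choice of $y_r$ in \eqref{eq:YDef} one has the discrete derivative estimate $|y_{re}-y_{pre}|\ll\log p/\log R_2$; verifying this bookkeeping cleanly across the four parities of $(p\mid d_1, p\mid d_2)$, and confirming that the accumulated error terms from the CRT step sum (against $p$) to $\ll N(\log N)^{k-1}\log\log N$, is the delicate part. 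The stated bound has room for slack, so one need not optimise the final constants; any Selberg-type error treatment suffices once the main cancellation is isolated.
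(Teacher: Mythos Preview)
Your approach is correct and is, under the hood, the same argument as the paper's. The paper simply writes the main term as $N\sum_{p}T_p/p^2$ (with $T_p$ as in \eqref{eq:Tdef}) and then quotes Lemma~\ref{lmm:TSize}, which gives $T_p\ll(\log p)(\log R_2)^{k-1}$; summing against $\sum_p(\log p)/p^2<\infty$ finishes. Your combination $\tilde\lambda_e=\lambda_e+\lambda_{pe}$ and the resulting diagonalised quadratic form $\sum_{(a,p)=1}(y_a-y_{pa})^2/f_1(a)$ is precisely $T_p$ by Lemma~\ref{lmm:Tres}, and your mean--value estimate $|y_a-y_{pa}|\ll\mathfrak S(\mathcal L)\log p/\log R_2$ is exactly the mechanism in the proof of Lemma~\ref{lmm:TSize}. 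So you have re-derived the needed lemma from scratch rather than citing it; your explicit CRT reduction via $p^2\mid L_j(n)$ is in fact cleaner than the paper's slightly informal use of $f([d,e,p^2])$.

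One small correction: your claimed bound $N(\log p)^2(\log R_2)^{k-2}/p^2$ for the small-$p$ contribution overlooks the boundary range $R_2/p\le a<R_2$, where $y_{pa}=0$ but $y_a\asymp\mathfrak S(\mathcal L)$ (since $P(0)\ne 0$ in general). By Lemma~\ref{lmm:SummationLemma} this range contributes $\asymp(\log p)(\log R_2)^{k-1}$ to $T_p$, which is actually the dominant term. It still sums over $p$ to $O(N(\log R_2)^{k-1})$, so the proposition follows; you just do not get the extra $\log R_2$ saving you claimed.
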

We also quote a result \cite{GGPY}[Theorem 7] which is based on the original result of Goldston, Pintz and Y\i ld\i r\i m in \cite{GPY}.
\begin{prpstn}\label{prpstn:S0}
There is a constant $C$ such that if $R_2^2\le N(\log{N})^{-C}$, we have
\[\sum_{N\le n\le 2N}\left(\sum_{\substack{d|\Pi(n)\\d\le R_2}}\lambda_d\right)^2=\frac{\mathfrak{S}(\mathcal{L})N(\log{R_2})^k}{(k-1)!}J+O\left(N(\log{N})^{k-1}\right)\]
where
\[J=\int_0^1P(1-t)^2t^{k-1}d t .\]
\end{prpstn}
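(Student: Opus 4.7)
The plan is to follow the standard two-stage evaluation of a diagonalised Selberg-type upper-bound sum: an arithmetic step reducing the double sum to a diagonal form in the $y_r$ variables, and an analytic step evaluating the resulting multiplicative-function sum.

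For the arithmetic step, I would expand the square and interchange summations to get
\begin{equation*}
S_0 = \sideset{}{'}{\sum}_{d_1,d_2\le R_2} \lambda_{d_1}\lambda_{d_2} \#\{N\le n\le 2N : [d_1,d_2]\mid \Pi(n)\}.
\end{equation*}
Since $\lambda_d$ is supported on square-free $d$ coprime to $A$, the Chinese Remainder Theorem together with $\nu_p(\mathcal{L})=k$ for $p\nmid A$ gives an inner count of $Nk^{\omega([d_1,d_2])}/[d_1,d_2] + O(k^{\omega([d_1,d_2])})$. The error contributes $\ll R_2^2(\log R_2)^{O(1)}\ll N(\log N)^{k-1}$ under the hypothesis $R_2^2\le N(\log N)^{-C}$ once $C$ is large. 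The main term reads $N\,\sideset{}{'}{\sum}\lambda_{d_1}\lambda_{d_2}/f([d_1,d_2])$ (using $f(d)=d/k^{\omega(d)}$ for such $d$), and a routine Möbius-inversion manipulation, exactly the one motivating the reparameterisation, converts this into the diagonal form
\begin{equation*}
S_0 = N\sideset{}{'}{\sum}_r \frac{y_r^2}{f_1(r)} + O\bigl(N(\log N)^{k-1}\bigr).
\end{equation*}

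For the analytic step, insert $y_r = \mu^2(r)\mathfrak{S}(\mathcal{L})P(\log(R_2/r)/\log R_2)$ and evaluate the remaining sum via Perron's formula applied to the Dirichlet series
\begin{equation*}
D(s) = \sideset{}{'}{\sum}_r \frac{\mu^2(r)}{f_1(r)r^s} = \prod_{p\nmid A}\left(1 + \frac{1}{f_1(p)p^s}\right).
\end{equation*}
A direct Euler-factor comparison shows $D(s) = \zeta(s+1)^k H(s)$ with $H$ holomorphic near $s=0$ and $H(0) = 1/\mathfrak{S}(\mathcal{L})$. Shifting the contour past the pole of order $k$ and using the identity $\int_0^1(1-t)^j t^{k-1}\,dt = j!(k-1)!/(j+k)!$ (applied term-by-term to the expansion of $P^2$) produces a main term of $\mathfrak{S}(\mathcal{L})^{-1}(\log R_2)^k/(k-1)!\cdot\int_0^1 P(1-t)^2 t^{k-1}\,dt$ with lower-order residues of size $(\log R_2)^{k-1}$. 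Multiplying by $\mathfrak{S}(\mathcal{L})^2 N$ gives the stated asymptotic.

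The main obstacle is the analytic bookkeeping in the Dirichlet-series step: one must verify that the Euler-factor comparison leaves exactly one surviving power of $\mathfrak{S}(\mathcal{L})$ in the numerator (rather than zero or two), and must control the subleading residues and the remaining contour integral uniformly in $R_2$ to achieve the error $O(N(\log N)^{k-1})$. The diagonalisation and arithmetic error estimates are entirely standard Selberg-sieve fare; it is the matching of the singular-series normalisation coming out of the contour shift that requires care.
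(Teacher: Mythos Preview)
The paper does not prove this proposition at all; it is simply quoted as \cite{GGPY}[Theorem~7]. Your sketch is the standard argument and is correct: the diagonalisation $S_0 = N\sideset{}{'}{\sum}_r y_r^2/f_1(r) + O(\cdot)$ is precisely the case $\delta=1$ of the paper's Lemma~\ref{lmm:Tres}, and your Euler-product computation giving $H(0)=\mathfrak{S}(\mathcal{L})^{-1}$ is right (so exactly one power of $\mathfrak{S}(\mathcal{L})$ survives, as you anticipated). The only cosmetic difference from the source is that you propose Perron's formula and a contour shift for the analytic step, whereas \cite{GGPY} and this paper (see the proof of Lemma~\ref{lmm:TResult}) use an elementary partial-summation lemma for sums of the shape $\sideset{}{'}{\sum}_{r\le R_2}\mu^2(r)f_1(r)^{-1}F(\log(R_2/r)/\log R_2)$; either route delivers the same main term with error $O((\log R_2)^{k-1})$.
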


Using Propositions \ref{prpstn:T0}, \ref{prpstn:Ti}, \ref{prpstn:S0} and \ref{prpstn:S'} we can now bound our sum $S$ in terms of the integers $k$ and $h$ and the polynomial $P$. For some $\epsilon>0$ we choose
\begin{equation}r_1=\frac{1}{2}+\epsilon,\qquad r_2=\frac{1}{4}-\epsilon,\end{equation}
so that the conditions of all the propositions are satisfied.

Proposition \ref{prpstn:S0} gives the size of $S_0$ immediately.

Using Proposition \ref{prpstn:S'} we have
\begin{align}
S'&=\sum_{\substack{N\le n \le 2N\\ \Pi(n)\text{ not square-free}}}w(n)\Lambda^2(n)\nonumber\\
&\le \sum_{\substack{N\le n \le 2N\\ \Pi(n)\text{ not square-free}}}\left(\nu+\frac{\log{\Pi(n)}}{\log{R_1}}\right)\Lambda^2(n)\nonumber\\
&\le \sum_{\substack{N\le n \le 2N\\ \Pi(n)\text{ not square-free}}}\left(\nu+\frac{k+\epsilon}{r_1}\right)\Lambda^2(n)\nonumber\\
&\ll N(\log{N})^{k-1}\log\log{N}.
\end{align}

To estimate $T_0$ and the $T_{r,j}$ we choose
\begin{align}
W_0(x)&=1-\frac{r_2}{r_1}x,\\
W_j(x_1,\dots,x_{j-1})&=\begin{cases}
\frac{1}{r_1}-1-\frac{1}{r_1}\sum_{i=1}^{j-1}x_i, &\epsilon<x_1<\dots<x_{j-1}\\
&\qquad\text{ and }\sum_{i=1}^{r-1}x_i<1-r_1\\
0,&\text{otherwise,}\end{cases}
\end{align}
which satisfy the conditions of Propositions \ref{prpstn:T0} and \ref{prpstn:Ti} respectively.

By Proposition \ref{prpstn:T0} we have
\begin{align}
T_0&=\sum_{N\le n\le 2N}\left(\sum_{\substack{p|\Pi(n)\\p\le R_1}}W_0\left(\frac{\log{p}}{\log{R_2}}\right)\right)\left(\sum_{\substack{d|\Pi(n)\\d\le R_2}}\lambda_d\right)^2\nonumber\\
&=\frac{\mathfrak{S}(\mathcal{L})N(\log{R_2})^{k}}{(k-1)!}J_0+O\left(N(\log{N})^{k-1}\log{\log{N}}\right)
\end{align}
where
\begin{align}
J_0&=J_{01}+J_{02}+J_{03},\\
J_{01}&=k\int_0^{1}\frac{r_1-r_2y}{r_1y}\int_0^{1-y}(P(1-x)-P(1-x-y))^2 x^{k-1}dx dy,\\
J_{02}&=k\int_0^{1}\frac{r_1-r_2y}{r_1y}\int_{1-y}^1 P(1-x)^2 x^{k-1}dx dy,\\
J_{03}&=k\int_{1}^{r_1/r_2}\frac{r_1-r_2y}{r_1y}\int_0^{1}P(1-x)^2x^{k-1}dx dy.
\end{align}
By Proposition \ref{prpstn:Ti} we have
\begin{align}
T_{r,j}&=\sum_{N\le n\le 2N}\chi_r(L_j(n))\Lambda^2(n)\nonumber\\
&=\sum_{N\le n\le 2N}\beta_r(L_j(n))\Lambda^2(n)\nonumber\\
&=\frac{\mathfrak{S}(\mathcal{L})N(\log{R_2})^{k+1}}{(k-2)!(\log{N})}J_r+O_r\left(N(\log\log{N})^{r+1}(\log{N})^{k-1}\right),
\end{align}
where
\begin{align}
\beta_r(n)&=\begin{cases}
W_r\left(\frac{\log{p_1}}{\log{n}},\dots,\frac{\log{p_{r-1}}}{\log{n}}\right),\qquad &\text{$n=p_1p_2\dots p_r$ with $p_1<\dots<p_r$,}\\
0,&\text{otherwise,}\end{cases}\\
J_r&=\int_{(x_1,\dots,x_{r-1})\in \mathcal{A}_r}\frac{ W_r(x_1,\dots,x_{r-1})I_1(r_2^{-1}x_1,\dots,r_2^{-1}x_{r-1})}{\left(\prod_{i=1}^{r-1}x_i\right)\left(1-\sum_{i=1}^{r-1}x_i\right)}d x_1\dots dx_{r-1}.
\end{align}
Therefore we see that
\begin{align}
\nu S_0-S'+T_0+\sum_{j=1}^k\sum_{r=1}^h T_{r,j}&=\frac{N\mathfrak{S}(\mathcal{L})(\log{R_2})^k}{(k-1)!}\left(\nu J-J_0+r_2k(k-1)(\sum_{r=1}^{h}J_r)\right)\\
&\qquad+O\left(\frac{N(\log{N})^{k}}{\log\log{N}}\right).\nonumber
\end{align}
Therefore we put
\begin{equation}\nu=\frac{J_0-r_2k(k-1)(\sum_{r=1}^h J_r)}{J}+\epsilon.\end{equation}
We then see that for any $N$ sufficiently large we have
\begin{equation}\nu S_0-S'-T_0+\sum_{j=1}^k\sum_{r=1}^hT_{r,j}>0.\end{equation}
Thus we have
\begin{equation}\Omega(\Pi(n))\le \left\lfloor\frac{J_0-r_2k(k-1)(\sum_{r=1}^{h}J_r)}{J}+\frac{k}{r_1}+2\epsilon\right\rfloor\end{equation}
infinitely often.

With these fixed, given $k,h$ and a polynomial $P$ we obtain a bound on $\Omega(\Pi(n))$. To make calculations feasible we choose $h=3$ (except we take $h=4$ when $k=10$). Numerical experiments indicate that the bounds of Theorem 1 cannot be improved by increasing $h$ except possibly when $k=5$. 

We can now explicitly write down the integrals $J_1$, $J_2$ and $J_3$, splitting the integral up depending on whether $\tilde{P}^+$ is positive or not. We put
\begin{equation}
\tilde{P}(x)=\int_0^xP(t)dt.
\end{equation}
Then we have that
 \begin{equation}
J_1=\left(\frac{1-r_1}{r_1}\right)\int_0^1\tilde{P}(1-x)^2x^{k-2}dx+O(\epsilon).
\end{equation}
Similarly
\begin{equation}J_2=J_{21}+J_{22}+J_{23}+O(\epsilon),\end{equation}
where
\begin{align}
J_{21}&=\int_{0}^1\frac{1-r_1-r_2y}{r_1y(1-r_2y)}\int_0^{1-y}\left(\tilde{P}(1-x)-\tilde{P}(1-x-y)\right)^2x^{k-2}d x d y,\\
J_{22}&=\int_{0}^1\frac{1-r_1-r_2y}{r_1y(1-r_2y)}\int_{1-y}^1\tilde{P}(1-x)^2x^{k-2}d x d y,\\
J_{23}&=\int_1^{(1-r_1)/r_2}\frac{1-r_1-r_2y}{r_1y(1-r_2y)}\int_{0}^1\tilde{P}(1-x)^2x^{k-2}d x d y.
\end{align}
Finally
\begin{equation}
J_3=J_{31}+J_{32}+J_{33}+J_{34}+J_{35}+J_{36}+J_{37}+J_{38}+O(\epsilon),\\
\end{equation}
where
\begin{align}
J_{31}&=\int_1^{(1-r_1)/2r_2}\int_y^{(1-r_1)/r_2-y}\frac{1-r_1-r_2(y+z)}{r_1y z(1-r_2(y+z))}\int_{0}^1\tilde{P}(1-x)^2x^{k-2}d x d z d y,\\
J_{32}&=\int_{0}^1\int_y^{(1-r_1)/r_2-y}\frac{1-r_1-r_2(y+z)}{r_1y z(1-r_2(y+z))}\int_{1-y}^1\tilde{P}(1-x)^2x^{k-2}d x d z d y,\\
J_{33}&=\int_{0}^{1}\int_{1}^{(1-r_1)/r_2-y}\frac{1-r_1-r_2(y+z)}{r_1y z(1-r_2(y+z))}\nonumber\\
&\qquad\qquad\int_{0}^{1-y}\left(\tilde{P}(1-x)-\tilde{P}(1-x-y)\right)^2x^{k-2}d x dz d y,\\
J_{34}&=\int_{0}^1\int_y^1\frac{1-r_1-r_2(y+z)}{r_1y z(1-r_2(y+z))}\nonumber\\
&\qquad\qquad\int_{1-z}^{1-y}\left(\tilde{P}(1-x)-\tilde{P}(1-x-y)\right)^2x^{k-2}d x dz d y,
\end{align}
\begin{align}
J_{35}&=\int_{1/2}^{1}\int_{y}^1\frac{1-r_1-r_2(y+z)}{r_1y z(1-r_2(y+z))}\nonumber\\
&\qquad\qquad\int_{0}^{1-z}\left(\tilde{P}(1-x)-\tilde{P}(1-x-y)-\tilde{P}(1-x-z)\right)^2x^{k-2}d x d z d y,\\
J_{36}&=\int_{0}^{1/2}\int_{1-y}^1\frac{1-r_1-r_2(y+z)}{r_1y z(1-r_2(y+z))}\nonumber\\
&\qquad\qquad\int_{0}^{1-z}\left(\tilde{P}(1-x)-\tilde{P}(1-x-y)-\tilde{P}(1-x-z)\right)^2x^{k-2}d x d z d y,\\
J_{37}&=\int_{0}^{1/2}\int_{y}^{1-y}\frac{1-r_1-r_2(y+z)}{r_1y z(1-r_2(y+z))}\nonumber\\
&\qquad\qquad\int_{1-y-z}^{1-z}\left(\tilde{P}(1-x)-\tilde{P}(1-x-y)-\tilde{P}(1-x-z)\right)^2x^{k-2}d x d z d y,\\
J_{38}&=\int_{0}^{1/2}\int_{y}^{1-y}\frac{1-r_1-r_2(y+z)}{r_1y z(1-r_2(y+z))}\int_0^{1-y-z}\biggl(\tilde{P}(1-x)-\tilde{P}(1-x-y)\nonumber\\
&\qquad\qquad-\tilde{P}(1-x-z)+\tilde{P}(1-x-y-z)\biggr)^2x^{k-2}d x d z d y.
\end{align}
We now have explicit representations of $J$, $J_0$, $J_1$, $J_2$ and $J_3$. We can calculate these by numerical integration given $k$ and a polynomial $P$.

Table \ref{Table:ValuesTable} gives close to optimal polynomials for $3\le k\le 10$ and the corresponding bounds obtained if we take $\epsilon$ sufficiently small. These give the results claimed in Theorem \ref{thrm:MainTheorem} except for $k=10$.
\begin{table}[hb]
\begin{center}
\caption{Bounds for $\Omega(\Pi(n))$}
\label{Table:ValuesTable}
\begin{tabular}{|c|c|c|}
\hline
$k$ & Bound on $\Omega(\Pi(n))$ & Polynomial $P(x)$\\
\hline
3 & 8.220\dots &  $1+14x$\\
4 & 11.653\dots & $1+22x$\\
5 & 15.306\dots & $1+33x$\\
6 & 18.936\dots & $1+10x+40x^2$\\
7 & 22.834\dots & $1+10x+60x^2$\\
8 & 26.860\dots & $1+10x+80x^2$\\
9 & 30.942\dots & $1+30x+300x^3$\\
10 & 35.158\dots & $1+35x-10x^2+400x^3$ \\
\hline
\end{tabular}
\end{center}
\end{table}

For $k=10$ we find an improvement if we also include the contribution when one of the $L_i(n)$ has 4 prime factors (we omit the explicit integrals here). In this case we choose the polynomial
\begin{equation}
P(x)=1+10x+150x^2.
\end{equation}
This gives us the bound 34.77... and so 10-tuples infinitely often have at most 34 prime factors, verifying Theorem 1.
\clearpage
\section{The quantities \texorpdfstring{$T_\delta$ and $T_\delta^*$}{T and T*}}
Before proving the propositions, we first establish some results about the quantities
\begin{align}
T_\delta&=\sideset{}{'}{\sum}_{d,e}\frac{\lambda_d \lambda_e}{f([d,e,\delta]/\delta)}, \label{eq:Tdef}\\
T_\delta^*&=\sideset{}{'}{\sum}_{d,e}\frac{\lambda_d \lambda_e}{f^*([d,e,\delta]/\delta)}.\label{eq:Tsdef}
\end{align}
Most of these results already exist in some form in the literature. These results will underlie the proof of the propositions. We note that in \cite{GGPY} Graham, Goldston, Pintz and Y\i ld\i r\i m used slightly different notation (our quantity $T^*_\delta$ is labelled $T_\delta$).

We first put $T_\delta$ and $T_\delta^*$ into an almost-diagonalised form.
\begin{lmm}\label{lmm:Tres}
We have
\begin{align*}
T_\delta&=\sideset{}{'}{\sum}_{\substack{a\\ (a,\delta)=1}}\frac{\mu^2(a)}{f_1(a)}\left(\sum_{s|\delta}\mu(s)y_{as}\right)^2,\\
T_\delta^*&=\sideset{}{'}{\sum}_{\substack{a\\ (a,\delta)=1}}\frac{\mu^2(a)}{f_1^*(a)}\left(\sum_{s|\delta}\mu(s)y^*_{as}\right)^2,
\end{align*}
where
\begin{equation*}y_a^*=\frac{\mu^2(a)a}{\phi(a)}\sideset{}{'}{\sum}_m\frac{y_{ma}}{\phi(m)}.\end{equation*}
\end{lmm}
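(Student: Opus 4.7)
The strategy is to prove the $T_\delta$ identity by starting from the claimed right-hand side, substituting the definition of $y_{as}$ in terms of the $\lambda_d$, and reducing back to $T_\delta$. The $T_\delta^*$ identity then follows by the same argument with $(f,f_1)$ replaced by $(f^*,f_1^*)$. The formula for $y_a^*$ in terms of the $y_{ma}$ is a separate direct computation using \eqref{eq:LambdaDef}.

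For $T_\delta$, substitute $y_{as}=\mu(as)f_1(as)\sideset{}{'}{\sum}_{r}\lambda_{ras}/f(ras)$ into the expanded square $\sum_{s,t|\delta}\mu(s)\mu(t)y_{as}y_{at}$. The M\"obius factors $\mu(s)\mu(t)\mu(as)\mu(at)$ collapse to $\mu^2(a)$ (using $(a,s)=(a,t)=1$ and $s,t$ square-free), and $f_1(as)f_1(at)=f_1(a)^2f_1(s)f_1(t)$ and $f(ras)=f(r)f(a)f(s)$ by multiplicativity. Re-indexing the inner variables as $d=ras$, $e=r'at$ and making $d,e$ the outer summation variables, the sum over $(a,s,t)$ for each fixed $(d,e)$ decouples into three independent pieces: $a$ over square-free divisors of $(d_2,e_2)$, where $d_2:=d/(d,\delta)$ and $e_2:=e/(e,\delta)$ are the $\delta$-coprime parts; $s$ over divisors of $(d,\delta)$; $t$ over divisors of $(e,\delta)$. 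Each collapses by the convolution identity $\sum_{u|n}f_1(u)=f(n)$ (i.e.\ $f=1*f_1$, which follows from $f_1=f*\mu$) into $f((d_2,e_2))$, $f((d,\delta))$, $f((e,\delta))$ respectively. Using $f(d)=f((d,\delta))f(d_2)$, the analogous identity for $e$, and $f(d_2)f(e_2)=f((d_2,e_2))f([d_2,e_2])$, the whole expression simplifies to $\sideset{}{'}{\sum}_{d,e}\lambda_d\lambda_e/f([d_2,e_2])$. Finally the elementary identity $[d,e,\delta]/\delta=[d_2,e_2]$ (both equal the $\delta$-coprime part of $[d,e]$) delivers $T_\delta$. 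The proof of the $T_\delta^*$ formula is identical, using $f^*=1*f_1^*$ in place of $f=1*f_1$.

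For the $y_a^*$ formula, substitute $\lambda_{da}=\mu(da)f(da)\sideset{}{'}{\sum}_{t}y_{dat}/f_1(dat)$ into the definition of $y_a^*$, simplify multiplicatively, and re-index $m=dt$ to obtain $y_a^*=\mu^2(a)\cdot\frac{f_1^*(a)f(a)}{f_1(a)f^*(a)}\sideset{}{'}{\sum}_{(m,a)=1}\frac{y_{ma}}{f_1(m)}\sum_{d|m}\frac{\mu(d)f(d)}{f^*(d)}$. The innermost sum factors prime-by-prime, and the calculation $1-f(p)/f^*(p)=(p-k)/(k(p-1))=f_1(p)/\phi(p)$ gives $\sum_{d|m}\mu(d)f(d)/f^*(d)=f_1(m)/\phi(m)$, cancelling the $f_1(m)$ in the denominator. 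Combined with the multiplicative identity $f_1^*(a)f(a)/(f_1(a)f^*(a))=\prod_{p|a}p/(p-1)=a/\phi(a)$, this yields the claimed formula. The main obstacle throughout is just bookkeeping the five multiplicative functions $f,f_1,f^*,f_1^*,\phi$ through the M\"obius inversions and coprime-part decompositions; the underlying identity $f=1*f_1$ is elementary, and in the case $\delta=1$ the argument reduces to the standard diagonalisation of Selberg's sieve.
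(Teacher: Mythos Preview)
Your argument is correct and is the standard Selberg-sieve diagonalisation: expand the square, substitute the definition of the $y$'s in terms of the $\lambda$'s, reindex with $d=ras$, $e=r'at$, and collapse the inner sums via $f=1*f_1$ (resp.\ $f^*=1*f_1^*$); the $y^*_a$ formula is likewise the routine change of basis you describe. The paper does not actually give a proof here --- it simply cites Selberg \cite{Selberg}[p.~85] for $T_\delta$ and \cite{GGPY}[Lemma~6] for $T_\delta^*$ --- and your write-up is precisely the computation those references carry out, so there is no substantive difference in approach.

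One small remark: your identity $[d,e,\delta]/\delta=[d_2,e_2]$ and the decoupling of the $(a,s,t)$-sum tacitly use that $\delta$ is square-free (and, for the primed sums to behave, that $(\delta,A)=1$). The lemma as stated in the paper does not impose this, but every application in the paper has $\delta=p_1\cdots p_{r-1}$ a product of distinct primes not dividing $A$, and the cited references make the same assumption, so this is not a genuine gap.
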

\begin{proof}
The result for $T_\delta$ is shown, for example, in \cite{Selberg}[Page 85]. The result for $T_\delta^*$ is proven in \cite{GGPY}[Lemma 6].
\end{proof}
We now again quote a Lemma from \cite{GGPY}, which expresses the $y_a^*$ in terms of the polynomial $P$ which we used to define the variables $y_a$.
\begin{lmm}\label{lmm:y*}
Let
\begin{equation*}y_a=\begin{cases}
\mu^2(a)\mathfrak{S}(\mathcal{L})P\left(\frac{\log{R_2/a}}{\log{R_2}}\right),\qquad &\text{if $0\le a< R_2$ and $(a,A)=1$} \\
0,&\text{otherwise}\end{cases}.\end{equation*}
Then we have for $(a,A)=1$ and $a<R_2$ that
\begin{align*}
y_a^*&=\mu^2(a)\frac{\phi(A)}{A}\mathfrak{S}(\mathcal{L})(\log{R_2})\tilde{P}\left(\frac{\log{R_2/a}}{\log{R_2}}\right)+O(\log\log{R_2}),
\end{align*}
where
\begin{equation*}
\tilde{P}(x)=\int_0^xP(t)dt.
\end{equation*}
If $(a,A)\ne 1$ or $a\ge R_2$ then we have
\begin{equation*}y_a^*=0.\end{equation*}
\end{lmm}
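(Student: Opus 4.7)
The plan is as follows. The vanishing cases are immediate: if $a \ge R_2$ or $(a,A) \ne 1$ then $y_{ma} = 0$ for every $m$ by the definition \eqref{eq:YDef}, so $y_a^* = 0$. I therefore focus on the case where $a < R_2$ is squarefree with $(a,A)=1$.

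First I would substitute \eqref{eq:YDef} into the given formula for $y_a^*$. The squarefree support of $y$ forces $\mu^2(m)=1$ and $(m,a)=1$, while $(ma,A)=1$ combined with $(a,A)=1$ forces $(m,A)=1$. After this bookkeeping one obtains
\[
y_a^* = \mu^2(a)\,\mathfrak{S}(\mathcal{L})\,\frac{a}{\phi(a)}\,\Sigma(a), \qquad \Sigma(a) := \sum_{\substack{m < R_2/a\\(m,Aa)=1}} \frac{\mu^2(m)}{\phi(m)}\, P\!\left(\frac{\log R_2/(ma)}{\log R_2}\right).
\]

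Next I would evaluate $\Sigma(a)$ by partial summation against the standard asymptotic
\[
\sum_{\substack{m \le x\\(m,q)=1}} \frac{\mu^2(m)}{\phi(m)} = \frac{\phi(q)}{q}\log x + C(q) + E_q(x),
\]
taken with $q = Aa$, where $C(q) \ll (q/\phi(q))\log\log(3q)$ and $E_q(x)$ is a small remainder. Writing $u = \log(R_2/a)/\log R_2$ and changing variables via $v = \log m/\log R_2$, the leading contribution reorganises (after a short integration by parts, in which the boundary term $uP(0)$ cancels) to
\[
\Sigma(a) = \frac{\phi(Aa)}{Aa}(\log R_2)\int_0^u P(u-v)\,dv + O(\log\log R_2) = \frac{\phi(Aa)}{Aa}(\log R_2)\,\tilde P(u) + O(\log\log R_2).
\]
Since $(a,A)=1$ and $a$ is squarefree, $\phi(Aa) = \phi(A)\phi(a)$, so the prefactor simplifies via $\frac{a}{\phi(a)}\cdot\frac{\phi(Aa)}{Aa} = \frac{\phi(A)}{A}$, producing the claimed main term.

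The only technical point to nail down is that all remaining contributions fit inside the stated $O(\log\log R_2)$ error uniformly in $a < R_2$. The contribution of $C(Aa)$ produces an $O(\log\log R_2)$ additive term (noting $\phi(A)/A$ is a fixed constant), and the contribution of $E_{Aa}(x)$, integrated against $P'(\cdot)/t$ over $[1, R_2/a]$, is controlled using the standard bound $E_q(x) \ll_\varepsilon (q/\phi(q))(\log x)^{-1+\varepsilon}$. Since essentially this calculation is carried out in \cite{GGPY}[Lemma 6], I would invoke their result rather than redoing the estimation from scratch; the main obstacle is purely bookkeeping — tracking the $q$-dependence of $C(q)$ and $E_q$ carefully enough that it survives the division by $\phi(a)/a$ when $a$ has many small prime factors.
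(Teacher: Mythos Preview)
Your proposal is correct and essentially coincides with the paper's own treatment: the paper does not give an independent proof but simply invokes \cite{GGPY}[Lemma 7], and your sketch is precisely the standard partial-summation argument underlying that lemma. The only minor slip is the citation: the relevant result in \cite{GGPY} is Lemma 7, not Lemma 6 (Lemma 6 there is the diagonalisation of $T_\delta^*$, used for Lemma \ref{lmm:Tres} above).
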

\begin{proof}
This is proven in \cite{GGPY}[Lemma 7].
\end{proof}
We will repeatedly use the following result.
\begin{lmm}\label{lmm:SummationLemma}
For $u\ge 1$ we have
\begin{align*}\sideset{}{'}{\sum}_{a<u}\frac{\mu^2(a)}{f_1(a)}&=\frac{A}{\phi(A)}\frac{(\log{u})^{k}}{\mathfrak{S}(\mathcal{L})k!}+O((\log{2u})^{k-1}),\\
\sideset{}{'}{\sum}_{a<u}\frac{\mu^2(a)}{f_1^*(a)}&=\frac{A}{\phi(A)}\frac{(\log{u})^{k-1}}{\mathfrak{S}(\mathcal{L})(k-1)!}+O((\log{2u})^{k-2}).\end{align*}
\end{lmm}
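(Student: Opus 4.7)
The plan is to establish both asymptotics in parallel by the standard Selberg-Delange / Perron-contour-shift technique, the only differences between the two cases being the exponent of $\zeta$ involved ($k$ versus $k-1$) and the exact value of the leading Euler product. First I would set up the Dirichlet series
$$F(s) = \sideset{}{'}{\sum}_a \frac{\mu^2(a)}{f_1(a)\, a^s}, \qquad F^*(s) = \sideset{}{'}{\sum}_a \frac{\mu^2(a)}{f_1^*(a)\, a^s}.$$
Because $1/f_1$, $1/f_1^*$, and the restriction $(a,A)=1$ are all multiplicative, both series admit Euler products over primes $p \nmid A$ with local factors $1 + k/((p-k)p^s)$ and $1 + (k-1)/((p-k)p^s)$ respectively, absolutely convergent for $\mathrm{Re}(s) > 0$.

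The next step is to factor out the pole at $s=0$. Expanding the logarithms and comparing with $k$ (resp.\ $k-1$) copies of $-\log(1 - p^{-s-1})$ shows the discrepancies at each prime are $O(1/p^{s+2})$, so we may write $F(s) = \zeta(s+1)^k G(s)$ and $F^*(s) = \zeta(s+1)^{k-1} G^*(s)$, with $G$ and $G^*$ given by absolutely convergent Euler products in a strip $\mathrm{Re}(s) > -\delta$. Evaluating each local factor at $s = 0$ yields explicit Euler products for $G(0)$ and $G^*(0)$; collecting the $(1-1/p)$ factors at primes $p \mid A$ into $A/\phi(A) = \prod_{p|A}(1-1/p)^{-1}$ and matching each constant against the definition of $\mathfrak{S}(\mathcal{L})^{-1}$ identifies the leading constants stated in the lemma.

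Then I would apply Perron's formula and shift the contour past the pole. For each sum, the residue of the integrand at $s = 0$, which is a pole of order $k+1$ (resp.\ $k$), is the coefficient of $s^k$ (resp.\ $s^{k-1}$) in the Taylor expansion of $G(s)(s\zeta(s+1))^k u^s$ (resp.\ $G^*(s)(s\zeta(s+1))^{k-1} u^s$); to leading order this is $G(0)(\log u)^k/k!$ (resp.\ $G^*(0)(\log u)^{k-1}/(k-1)!$), the claimed main term. The shifted contour integral on $\mathrm{Re}(s) = -\sigma$ is then estimated by the convexity bound for $\zeta$ together with the absolute convergence of $G$ and $G^*$ in this region.

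The main obstacle is the last, quantitative step: obtaining precisely $O((\log 2u)^{k-1})$ and $O((\log 2u)^{k-2})$ requires careful optimisation of the Perron truncation parameter $T$ and of $\sigma$, together with bookkeeping of the subleading Laurent coefficients of $G(s)\zeta(s+1)^k$ and $G^*(s)\zeta(s+1)^{k-1}$. This is standard Selberg-Delange machinery, and since the lemma is very close to a result in the Graham-Goldston-Pintz-Y\i ld\i r\i m paper already cited, a short proof by reference would be equally satisfactory.
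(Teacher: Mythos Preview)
Your approach is correct, but the paper's own proof is simply the one-line citation ``This follows, for example, from \cite{GGPY}[Lemma 3]'' --- precisely the ``short proof by reference'' you anticipate in your final sentence. The cited GGPY result is a general elementary summation lemma for non-negative multiplicative functions $g$ with $g(p)\approx\kappa/p$, applied here once with $\kappa=k$ (for $g=1/f_1$) and once with $\kappa=k-1$ (for $g=1/f_1^*$); in that setting the asymptotic and the power-saving in the logarithm come from convolution identities and partial summation rather than complex analysis. Your Selberg--Delange/Perron route is a legitimate self-contained alternative that delivers the same main term and the same $O((\log 2u)^{k-1})$, $O((\log 2u)^{k-2})$ error terms; it trades a citation for heavier analytic machinery, and the only place requiring real care is, as you note, the contour-and-truncation bookkeeping needed to pin down the error exponent exactly. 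Since the paper only ever uses this lemma for its order of magnitude (in Lemmas \ref{lmm:Tdiv} and \ref{lmm:TSize}), either approach is more than sufficient.
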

\begin{proof}
This is follows, for example, from \cite{GGPY}[Lemma 3].
\end{proof}
In order to estimate the terms $T_\delta^*$ we wish to remove the condition $(a,\delta)=1$ in the summation over $a$, and remove the constraint caused by $y_a$ and $y^*_a$ only being supported on square-free $a$. We let
\begin{align}
P_a&=\begin{cases}
\mathfrak{S}(\mathcal{L})P\left(\frac{\log{R_2/a}}{\log{R_2}}\right),\qquad &\text{if $0\le a < R_2$}\\
0,&\text{otherwise,}
\end{cases}\label{eq:PDef}\\
P^*_a&=\begin{cases}
\frac{\phi(A)}{A}\mathfrak{S}(\mathcal{L})(\log{R_2})\tilde{P}\left(\frac{\log{R_2/a}}{\log{R_2}}\right),\qquad &\text{if $0\le a < R_2$}\\
0,&\text{otherwise,}
\end{cases}\label{eq:P*Def}
\end{align}
so that these are equal to $y_a$ and $y^*_a+O(\log\log{R_2})$ respectively when $a$ is square-free and coprime to $A$.
\begin{lmm}\label{lmm:Tdiv}
Let $(\delta,A)=1$. Then we have
\begin{align*}
T_\delta&=\sideset{}{'}{\sum}_a\frac{\mu^2(a)}{f_1(a)}\left(\sum_{s|\delta}\mu(s)P_{as}^*\right)^2+O\left(d(\delta)^2(\log{R_2})^{k-1}\log\log{R_2}\right),\\
T_\delta^*&=\sideset{}{'}{\sum}_a\frac{\mu^2(a)}{f_1^*(a)}\left(\sum_{s|\delta}\mu(s)P_{as}^*\right)^2+O\left(d(\delta)^2(\log{R_2})^{k}\log\log{R_2}\right).
\end{align*}
\end{lmm}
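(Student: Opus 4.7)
The plan is to combine three earlier tools: Lemma \ref{lmm:Tres}, which expresses $T_\delta,T_\delta^*$ as almost-diagonal sums in $y_a,y_a^*$ subject to $(a,\delta)=1$; Lemma \ref{lmm:y*}, which replaces $y_a^*$ by $P_a^*$ at the cost of an error $O(\log\log R_2)$ per index; and Lemma \ref{lmm:SummationLemma}, which gives $\sideset{}{'}{\sum}_{a<u}\mu^2(a)/f_1(a)\ll(\log u)^k$ and $\sideset{}{'}{\sum}_{a<u}\mu^2(a)/f_1^*(a)\ll(\log u)^{k-1}$. Two operations are then required: (i) replacement of $y_{as},y_{as}^*$ in the inner square by $P_{as},P_{as}^*$, and (ii) removal of the coprimality constraint $(a,\delta)=1$ from the outer sum.

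Step (i) is essentially bookkeeping. The conditions $(a,A)=1$ (from the prime on the sum), $(a,\delta)=1$, and $(\delta,A)=1$ together guarantee that $as$ is squarefree and coprime to $A$ whenever $\mu(s)\neq 0$ and $s\mid\delta$, so $y_{as}=P_{as}$ \emph{exactly} and $y_{as}^*=P_{as}^*+O(\log\log R_2)$. Expanding $\bigl(\sum_{s}\mu(s)y_{as}^*\bigr)^2=\bigl(\sum_{s}\mu(s)P_{as}^*\bigr)^2+O(d(\delta)^2\log R_2\,\log\log R_2)$, using $|P_{as}^*|\ll\log R_2$ together with the $d(\delta)$ divisors of $\delta$, and summing over $a$ via Lemma \ref{lmm:SummationLemma}, generates an admissible error of size $d(\delta)^2(\log R_2)^k\log\log R_2$ for $T_\delta^*$ and no error for $T_\delta$.

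Step (ii) is the substantive part. Write $\sideset{}{'}{\sum}_{(a,\delta)=1}=\sideset{}{'}{\sum}_a-\sideset{}{'}{\sum}_{(a,\delta)>1}$ and bound the second sum. For each $a$ with $(a,\delta)>1$, fix a prime $p\mid(a,\delta)$, write $\delta=p\delta'$, and split the inner sum according to whether $p\mid s$ to obtain the telescoping identity
\[
\sum_{s\mid\delta}\mu(s)Q_{as}=\sum_{\substack{s_1\mid\delta'\\p\nmid s_1}}\mu(s_1)\bigl(Q_{as_1}-Q_{as_1p}\bigr),\qquad Q\in\{P,P^*\}.
\]
In the interior regime $as_1p<R_2$, the mean value theorem applied to the underlying polynomial yields $|P_{as_1}-P_{as_1p}|\ll\log p/\log R_2$ and $|P_{as_1}^*-P_{as_1p}^*|\ll\log p$. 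In the boundary regime $R_2/p\le as_1<R_2$, where $Q_{as_1p}=0$, the $P^*$-case remains clean because $\tilde P(0)=0$ gives $|P_{as_1}^*|\ll\log p$ just as well, but the $P$-case only admits the crude bound $|P_{as_1}|\ll1$.

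Squaring, summing $\sideset{}{'}{\sum}_{p\mid a}\mu^2(a)/f_1(a)\ll(\log R_2)^k/p$ (and its starred counterpart) via Lemma \ref{lmm:SummationLemma}, and then summing over $p\mid\delta$ using $\sum_{p\mid\delta}\log p/p\ll\log\log R_2$ produces the required errors for all interior contributions. For the $P$-boundary contribution one exchanges the order of summation: for fixed $s_1\mid\delta'$ the $a$'s that produce a boundary term are confined to the interval $[R_2/(ps_1),R_2/s_1)$, so Lemma \ref{lmm:SummationLemma} bounds the corresponding weighted sum of $\mu^2(a)/f_1(a)$ by $O((\log R_2)^{k-1}\log p/p)$, and using $b_a\le d(\delta)$ to linearise the square gives a total of $O(d(\delta)^2(\log R_2)^{k-1}\log\log R_2)$. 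The main technical obstacle is precisely this boundary bookkeeping: the naive combination of the interior Lipschitz bound with the crude count $b_a\le d(\delta)$ of boundary indices inside the square would lose a full factor of $\log R_2$, and the announced bound is only recovered by observing that each boundary $s_1$ forces $a$ into a logarithmically short interval, so the average of $b_a$ is small.
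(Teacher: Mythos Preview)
Your argument is correct and follows essentially the same route as the paper: replace $y_a^*$ by $P_a^*$ via Lemma~\ref{lmm:y*}, then bound the sum over $(a,\delta)>1$ by fixing a prime $p\mid(a,\delta)$, pairing $s$ with $sp$, and splitting each difference $Q_{as_1}-Q_{as_1p}$ into an interior (Lipschitz) piece and a boundary piece handled via the short interval $a'\in[R_2/(p^2s_1),R_2/(ps_1))$ together with Cauchy--Schwarz. Your remark that $\tilde P(0)=0$ already forces $|P_{as_1}^*|\ll\log p$ on the boundary is a mild simplification the paper does not exploit (it uses the crude bound $\tilde P\ll1$ there and recovers the factor $\log p$ from the short interval in both the starred and unstarred cases), and you have tacitly corrected what is evidently a typo in the displayed statement: the $T_\delta$ formula should carry $P_{as}$ rather than $P_{as}^*$, consistently with how it is used in Lemma~\ref{lmm:TResult}.
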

\begin{proof}
We only prove the result for the $T^*_\delta$ here, the result for the $T_\delta$ follows from a completely analogous argument.
We see that since $P^*_a\ll \log{R_2}$ we have
\begin{align}
T_\delta^*&=\sideset{}{'}{\sum}_{\substack{a\\(a,\delta)=1}}\frac{\mu^2(a)}{f_1^*(a)}\left(\sum_{s|\delta}\mu(s)P^*_{as}+O(\log\log{R_2})\right)^2\nonumber\\
&=\sideset{}{'}{\sum}_{\substack{a\\(a,\delta)=1}}\frac{\mu^2(a)}{f_1^*(a)}\left(\sum_{s|\delta}\mu(s)P^*_{as}\right)^2+O\left(d(\delta)^2(\log{R_2})(\log\log{R_2})\sum_{a<R_2}\frac{\mu^2(a)}{f_1^*(a)}\right).
\end{align}
By Lemma \ref{lmm:SummationLemma} the error term above is $O(d(\delta)^2(\log{R_2})^k\log\log{R_2})$.

We see that to prove the result it is sufficient to prove
\begin{equation}\sideset{}{'}{\sum}_{\substack{a\\(a,\delta)\ne 1}}\frac{\mu^2(a)}{f_1^*(a)}\left(\sum_{s|\delta}\mu(s)P_{as}^*\right)^2\ll (\log{R_2})^{k}d(\delta)^2 (\log\log{R_2}).\label{eq:NotCoprimeSmall}\end{equation}
Since all terms in the sum are non-negative, we have
\begin{equation}
\sideset{}{'}{\sum}_{\substack{a\\(a,\delta)\ne 1}}\frac{\mu^2(a)}{f_1^*(a)}\left(\sum_{s|\delta}\mu(s)P_{as}^*\right)^2\le \sum_{p|\delta}\sideset{}{'}{\sum}_{\substack{a\\p|a}}\frac{\mu^2(a)}{f_1^*(a)}\left(\sum_{s|\delta}\mu(s)P_{as}^*\right)^2.
\end{equation}
We consider the inner sum. By the Cauchy-Schwarz inequality we have
\begin{align}\sideset{}{'}{\sum}_{\substack{a\\p|a}}\frac{\mu^2(a)}{f_1^*(a)}\left(\sum_{s|\delta}\mu(s)P_{as}^*\right)^2&=\sideset{}{'}{\sum}_{\substack{a\\p|a}}\frac{\mu^2(a)}{f_1^*(a)}\left(\sum_{s|\delta/p}\mu(s)(P_{as}^*-P_{asp}^*)\right)^2\nonumber\\
&\ll d(\delta)\sum_{s|\delta/p}\sideset{}{'}{\sum}_{\substack{a\\p|a}}\frac{\mu^2(a)}{f_1^*(a)}\left(P_{as}^*-P^*_{asp}\right)^2.\end{align}
We split the summation over $a$ depending on whether the $P_{as}^*$ and $P_{asp}^*$ terms vanish (since $P_b^*=0$ for $b\ge R_2$).
\begin{align}
\sideset{}{'}{\sum}_{\substack{a\\p|a}}\frac{\mu^2(a)}{f_1^*(a)}\left(\sum_{s|\delta}\mu(s)P_{as}^*\right)^2&\ll d(\delta) \sum_{s|\delta/p}\sideset{}{'}{\sum}_{\substack{a'<R_2/s p^2}}\frac{\mu^2(a'p)}{f_1^*(a' p)}\left(P_{a' p s}^*-P^*_{a' s p^2}\right)^2\nonumber\\
&\qquad+d(\delta)\sum_{s|\delta/p}\sideset{}{'}{\sum}_{\substack{R_2/ s p^2\le a'<R_2/s p}}\frac{\mu^2(a' p)}{f_1^*(a' p)}(P_{a' ps}^*)^2.
\end{align}
We substitute in the value of $P^*$.
\begin{align}
\frac{1}{d(\delta)}\sideset{}{'}{\sum}_{\substack{a\\p|a}}&\frac{\mu^2(a)}{f_1^*(a)}\left(\sum_{s|q}\mu(s)P_{as}^*\right)^2\nonumber\\
&\ll (\log{R_2})^2\sum_{s|\delta/p}\sideset{}{'}{\sum}_{\substack{a'<R_2/s p^2}}\frac{\mu^2(a' p)}{f_1^*(a' p)}\left(\tilde{P}\left(1-\frac{\log{a' p s}}{\log{R_2}}\right)-\tilde{P}\left(1-\frac{\log{a' s p^2}}{\log{R_2}}\right)\right)^2\nonumber\\
&\qquad+(\log{R_2})^2\sum_{s|\delta/p}\sideset{}{'}{\sum}_{\substack{R_2/ s p^2\le a'<R_2/s p}}\frac{\mu^2(a' p)}{f_1^*(a' p)}\tilde{P}\left(1-\frac{\log{a'ps}}{\log{R_2}}\right)^2.
\end{align}
In the first sum above both the arguments of the polynomials differ by $\log{p}/\log{R_2}$. Since they are fixed polynomials, the derivative of the polynomial is $\ll 1$ and so the difference  is $\ll \log{p}/\log{R_2}$. In the second sum we just use the trivial bound $\tilde{P}(x)\ll 1$.

This gives
\begin{align}
\frac{1}{d(\delta)}\sideset{}{'}{\sum}_{\substack{a\\p|a}}\frac{\mu^2(a)}{f_1^*(a)}\left(\sum_{s|q}\mu(s)P_{as}^*\right)^2&\ll \frac{(\log{p})^2}{f_1^*(p)}\sum_{s|\delta/p}\sideset{}{'}{\sum}_{a<R_2/s p^2}\frac{\mu^2(a)}{f^*(a)}\nonumber\\
&\qquad+\frac{(\log{R_2})^2}{f(p)}\sum_{s|\delta/p}\sideset{}{'}{\sum}_{R_2/s p^2\le a<R_2/s p}\frac{\mu^2(a)}{f^*(a)}.
\end{align}
Using Lemma \ref{lmm:SummationLemma} we see that the first sum is $\ll d(\delta)(\log{p})^2(\log{R_2})^{k-1}/f_1^*(p)$ and the second sum is $\ll d(\delta)(\log{p})(\log{R_2})^{k}/f_1^*(p)$ because of the range of summation over $a$. Thus
\begin{equation}\sideset{}{'}{\sum}_{\substack{a\\p|a}}\frac{\mu^2(a)}{f_1^*(a)}\left(\sum_{s|\delta}\mu(s)P_{as}^*\right)^2\ll d(\delta)^2\frac{\log{p}}{f_1^*(p)}(\log{R_2})^{k}\ll d(\delta)^2\frac{\log{p}}{p}(\log{R_2})^{k}.\end{equation}
Summing over all $p| \delta$ gives the bound
\begin{equation}d(\delta)^2(\log{R_2})^k\sum_{p|\delta}\frac{\log{p}}{p}.\end{equation}
Splitting the sum into a sum over $p\le \log{R_2}$ and a sum over $p > \log{R_2}$ we get the bound
\begin{equation}d(\delta)^2(\log{R_2})^{k}(\log\log{R_2}).\end{equation}
This gives \eqref{eq:NotCoprimeSmall}, and hence the Lemma.
\end{proof}
Essentially the same argument as above also yields a useful bound on the size of $T_\delta$ and $T_\delta^*$.
\begin{lmm}\label{lmm:TSize}
Let $(\delta,A)=1$. Then we have
\begin{align*}
T_\delta&\ll \min_{p|\delta}\left(\log{p}\right)d(\delta)^2 (\log{R_2})^{k-1},\\
T_\delta^*&\ll \min_{p|\delta}\left(\log{p}\right)d(\delta)^2 (\log{R_2})^{k}+d(\delta)^2(\log{R_2})^{k}\log\log{R_2}.
\end{align*}
\end{lmm}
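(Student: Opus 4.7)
The plan is to follow the blueprint of the proof of Lemma \ref{lmm:Tdiv}, but since here we only need an upper bound we can apply Cauchy--Schwarz on the $s$-sum at an early stage and avoid tracking any main term. Starting from the diagonalised forms in Lemma \ref{lmm:Tres}, I fix a prime $p\mid\delta$ realising $\min_{p\mid\delta}\log p$. Because $(a,\delta)=1$ in the outer sum, every $s\mid\delta$ pairs uniquely with $s=s'$ or $s=s'p$ for some $s'\mid\delta/p$, so the inner combination telescopes to $\sum_{s'\mid\delta/p}\mu(s')(y^*_{as'}-y^*_{as'p})$, and Cauchy--Schwarz bounds its square by $d(\delta/p)\sum_{s'\mid\delta/p}(y^*_{as'}-y^*_{as'p})^2$.

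Next I would substitute Lemma \ref{lmm:y*} for $y^*$ and split the $a$-sum according to whether both terms are non-zero. In the interior range $as'p<R_2$ the arguments of $\tilde P$ differ by only $(\log p)/\log R_2$ and $\tilde P$ has bounded derivative, giving $|y^*_{as'}-y^*_{as'p}|\ll\log p+\log\log R_2$; Lemma \ref{lmm:SummationLemma} then contributes $(\log R_2)^{k-1}$ from the $a$-sum, producing an interior bound of $((\log p)^2+(\log\log R_2)^2)(\log R_2)^{k-1}$ per $s'$. In the boundary range $R_2/(s'p)\le a<R_2/s'$ only $y^*_{as'}$ survives, is $O(\log R_2)$, while the truncated sum $\sum^{\prime}_{R_2/(s'p)\le a<R_2/s'}\mu^2(a)/f_1^*(a)$ is $\ll(\log p)(\log R_2)^{k-2}$ after differencing the main term of Lemma \ref{lmm:SummationLemma}. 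Combining yields a per-$s'$ boundary contribution of $(\log p)(\log R_2)^k$, and the outer sum $\sum_{s'\mid\delta/p}$ supplies a second divisor factor. Merging the two error shapes and absorbing the residual $(\log\log R_2)^2(\log R_2)^{k-1}$ term into the $(\log R_2)^k\log\log R_2$ summand gives the claim for $T^*_\delta$.

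The argument for $T_\delta$ is entirely analogous but cleaner: $y_a$ is defined exactly by \eqref{eq:YDef} and is $O(1)$ rather than $O(\log R_2)$, so no $\log\log R_2$ error appears, the interior difference becomes simply $\ll(\log p)/\log R_2$, and Lemma \ref{lmm:SummationLemma} applied to $f_1$ supplies the weight $(\log u)^k/k!$ in place of $(\log u)^{k-1}/(k-1)!$. One finds $(\log p)^2(\log R_2)^{k-2}$ from the interior and $(\log p)(\log R_2)^{k-1}$ from the boundary, the latter dominating; a final factor of $d(\delta/p)\cdot d(\delta/p)\le d(\delta)^2$ from the two divisor sums delivers the stated bound. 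The step that requires most care is the differencing estimate $(\log(R_2/s'))^j-(\log(R_2/s'p))^j\ll(\log p)(\log R_2)^{j-1}$, since it is precisely the place where the single $\log p$ factor featured in the final bound is isolated; everything else is bookkeeping of divisor counts against previously established summation lemmas.
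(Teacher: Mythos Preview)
Your proposal is correct and follows essentially the same route as the paper: fix a prime $p\mid\delta$, pair $s'$ with $s'p$, apply Cauchy--Schwarz, split the $a$-sum into an interior range (derivative bound on $\tilde P$ or $P$) and a boundary range (differencing Lemma \ref{lmm:SummationLemma}), and collect divisor factors. The only cosmetic difference is that the paper first replaces $y^*_{as}$ by $P^*_{as}$ via the opening step of Lemma \ref{lmm:Tdiv}, thereby absorbing the $\log\log R_2$ error once up front, whereas you carry it through the interior estimate as the $(\log\log R_2)^2(\log R_2)^{k-1}$ term and absorb it at the end---both give the same final bound.
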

\begin{proof}
For $p|\delta$ we have (using the fact all terms are non-negative)
\begin{align}
T_\delta^*&=\sideset{}{'}{\sum}_{\substack{a\\(a,\delta)=1}}\frac{\mu^2(a)}{f_1^*(a)}\left(\sum_{s|\delta}\mu(s)P_{as}^*\right)^2+O(d(\delta)^2(\log{R_2})^{k}\log\log{R_2})\nonumber\\
&\ll d(\delta)\sideset{}{'}{\sum}_{a}\frac{\mu^2(a)}{f_1^*(a)}\sum_{s|\delta/p}\left(P_{as}^*-P_{asp}^*\right)^2+d(\delta)^2(\log{R_2})^{k}\log\log{R_2}\nonumber\\
&\ll d(\delta) (\log{R_2})^2\sum_{s|\delta/p}\sideset{}{'}{\sum}_{\substack{a<R_2/s p}}\frac{\mu^2(a)}{f_1^*(a)}\left(\tilde{P}\left(1-\frac{\log{a s}}{\log{R_2}}\right)-\tilde{P}\left(1-\frac{\log{a s p}}{\log{R_2}}\right)\right)^2\nonumber\\
&\qquad+d(\delta)(\log{R_2})^2\sum_{s|\delta/p}\sideset{}{'}{\sum}_{\substack{R_2/ s p\le a<R_2/s}}\frac{\mu^2(a)}{f_1^*(a)}\tilde{P}\left(1-\frac{\log{as}}{\log{R_2}}\right)^2\nonumber\\
&\qquad+d(\delta)^2(\log{R_2})^{k}\log\log{R_2}.
\end{align}
Noting the difference of the polynomials in the first sum is $\ll \log{p}/\log{R_2}$, and the polynomial in the second sum is $\ll 1$, we have
\begin{align}
T_\delta^*&\ll d(\delta)(\log{p})^2\sum_{s|\delta/p}\sideset{}{'}{\sum}_{\substack{a<R_2/s p}}\frac{\mu^2(a)}{f_1^*(a)}+d(\delta)(\log{R_2})^2\sum_{s|\delta/p}\sideset{}{'}{\sum}_{\substack{R_2/ s p\le a<R_2/s}}\frac{\mu^2(a)}{f_1^*(a)}\nonumber\\
&\qquad+d(\delta)^2(\log{R_2})^{k}\log\log{R_2}.
\end{align}
Appealing to Lemma \ref{lmm:SummationLemma} as in the previous lemma we obtain 
\begin{equation}T_\delta^*\ll d(\delta)^2(\log{p})(\log{R_2})^{k}+d(\delta)^2(\log{R_2})^{k}\log\log{R_2}.\end{equation}
The result for $T_\delta$ follows by a completely analogous argument. In this case the first line holds without the $O(d(\delta)^2(\log{R_2})^{k}\log\log{R_2})$ term, and so the final expression also holds without this term.
\end{proof}
With these results we are able to get an integral expression for $T_\delta$ and $T_\delta^*$ when $\delta$ has a bounded number of prime factors.
\begin{lmm}\label{lmm:TResult}
Let $p_1,\dots, p_{r-1}\nmid A$ for some primes $p_1,\dots, p_{r-1}$. Then we have
\begin{align*}
T_{p_1\dots p_{r-1}}&=(\log{R_2})^{k}\frac{\mathfrak{S}(\mathcal{L})}{(k-1)!}I_0\left(\frac{\log{p_1}}{\log{R_2}},\dots ,\frac{\log{p_{r-1}}}{\log{R_2}}\right)\\
&\qquad+O_r((\log{R_2})^{k-1}\log\log{R_2}),\nonumber\\
T_{p_1\dots p_{r-1}}^*&=(\log{R_2})^{k+1}\frac{\phi(A)\mathfrak{S}(\mathcal{L})}{A(k-2)!}I_1\left(\frac{\log{p_1}}{\log{R_2}},\dots ,\frac{\log{p_{r-1}}}{\log{R_2}}\right)\\
&\qquad+O_r((\log{R_2})^{k}\log\log{R_2}).\nonumber
\end{align*}
Here
\begin{align*}
I_0(x_1,\dots,x_{r-1})&=\int_0^1\left(\sum_{J\subset\{1,\dots,r-1\}}P^+\left(1-t-\sum_{j\in J}x_j\right)(-1)^{|J|}\right)^2 t^{k-1} d t,\\
I_1(x_1,\dots,x_{r-1})&=\int_0^1\left(\sum_{J\subset\{1,\dots,r-1\}}\tilde{P}^+\left(1-t-\sum_{j\in J}x_j\right)(-1)^{|J|}\right)^2 t^{k-2} d t,\\
P^+(x)&=\begin{cases}
P(x),\qquad &x\ge 0,\\
0,&\text{otherwise},
\end{cases}\\
\tilde{P}^+(x)&=\begin{cases}
\int_0^x P(t)d t,\qquad &x\ge 0,\\
0,&\text{otherwise.}\end{cases}
\end{align*}
\end{lmm}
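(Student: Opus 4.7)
The plan is to specialise Lemma \ref{lmm:Tdiv} to $\delta=p_1\cdots p_{r-1}$ in order to reduce each of $T_\delta$ and $T_\delta^*$ to a sum over a single variable $a$, and then to convert that sum into a Riemann integral via Abel summation against Lemma \ref{lmm:SummationLemma}.

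Since $p_1,\dots,p_{r-1}$ are distinct primes with each $p_i\nmid A$, the modulus $\delta=p_1\cdots p_{r-1}$ is squarefree, coprime to $A$, and has $d(\delta)=2^{r-1}=O_r(1)$. Substituting this into Lemma \ref{lmm:Tdiv} produces the two error terms $O_r((\log R_2)^{k-1}\log\log R_2)$ and $O_r((\log R_2)^{k}\log\log R_2)$, which already match the advertised remainders of the lemma; only the main terms remain.

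Next I would reparameterise. The divisors $s\mid\delta$ correspond bijectively to subsets $J\subset\{1,\dots,r-1\}$ via $s=\prod_{j\in J}p_j$, with $\mu(s)=(-1)^{|J|}$ and $\log s=\sum_{j\in J}\log p_j$. Writing $x_j=\log p_j/\log R_2$ and inserting the explicit formulas \eqref{eq:PDef}, \eqref{eq:P*Def}, the inner sum $\sum_{s\mid\delta}\mu(s)P^{(*)}_{as}$ becomes a fixed multiplicative factor (depending only on $\mathfrak{S}(\mathcal{L})$, $\log R_2$ and $\phi(A)/A$) times
\begin{equation*}
G_{\mathbf{x}}(t)=\sum_{J\subset\{1,\dots,r-1\}}(-1)^{|J|}Q^+\Bigl(1-t-\sum_{j\in J}x_j\Bigr)
\end{equation*}
evaluated at $t=\log a/\log R_2$, with $Q^+=P^+$ for the unstarred sum and $Q^+=\tilde{P}^+$ for the starred sum. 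The positive-part decoration is exactly what encodes the cutoff $as<R_2$, since each individual term vanishes precisely when $1-t-\sum_{j\in J}x_j<0$.

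The remaining task is to estimate
\begin{equation*}
\sideset{}{'}{\sum}_{a}\frac{\mu^2(a)}{f_1^{(\star)}(a)}\,G_{\mathbf{x}}\Bigl(\frac{\log a}{\log R_2}\Bigr)^2,
\end{equation*}
where $f_1^{(\star)}$ stands for $f_1$ or $f_1^*$. Abel summation applied to the asymptotic of Lemma \ref{lmm:SummationLemma}, followed by the change of variables $t=\log a/\log R_2$, converts this into an integral of $G_{\mathbf{x}}(t)^2\,t^{m-1}$ against a density of total mass $\asymp (\log R_2)^{m}/(m-1)!$, where $m=k$ in the unstarred case and $m=k-1$ in the starred case, with additional error $O_r((\log R_2)^{m-1}\log\log R_2)$. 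Collecting the prefactors with the resulting integrals over $t$ produces exactly the main terms $\mathfrak{S}(\mathcal{L})(\log R_2)^{k}I_0/(k-1)!$ and $\phi(A)\mathfrak{S}(\mathcal{L})(\log R_2)^{k+1}I_1/(A(k-2)!)$ of the lemma, since the integrals over $t$ match the definitions of $I_0$ and $I_1$ verbatim.

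The main obstacle is bookkeeping errors through Abel summation. The function $G_{\mathbf{x}}(t)^2\,t^{m-1}$ is piecewise polynomial with up to $2^{r-1}$ kinks, located at the thresholds $t=1-\sum_{j\in J}x_j$ where individual $Q^+$ summands switch on. I would verify that $G_{\mathbf{x}}$ and $G_{\mathbf{x}}'$ are uniformly $O_r(1)$ on each smooth piece, so that when the secondary term $O((\log 2u)^{m-2})$ from Lemma \ref{lmm:SummationLemma} is integrated against $G_{\mathbf{x}}^2$, the resulting contribution is absorbed into the advertised $\log\log R_2$ error budget. Once that uniform control is in hand, the argument is essentially the same in the two cases and proceeds in parallel.
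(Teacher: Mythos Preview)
Your proposal is correct and follows essentially the same route as the paper. The paper also starts from Lemma~\ref{lmm:Tdiv} with $\delta=p_1\cdots p_{r-1}$ (so $d(\delta)=2^{r-1}=O_r(1)$), rewrites the divisor sum over $s\mid\delta$ as a signed sum over subsets $J$, and then converts the remaining sum over $a$ into the integral $I_0$ or $I_1$. The only cosmetic difference is that where you propose to do the last step by Abel summation against Lemma~\ref{lmm:SummationLemma}, the paper quotes the packaged summation lemma \cite{GGPY}[Lemma~4], which is precisely that Abel summation carried out once and for all; your description of the bookkeeping (piecewise polynomial $G_{\mathbf{x}}$, finitely many breakpoints, uniform $O_r(1)$ bounds) is exactly what is needed to justify applying it.
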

\begin{proof}
Let $\delta=p_1\dots p_{r-1}$.

By Lemmas \ref{lmm:Tres} and \ref{lmm:Tdiv} we have that
\begin{equation}T_{\delta}^*=\sideset{}{'}{\sum}_a\frac{\mu^2(a)}{f_1^*(a)}\left(\sum_{s|\delta}\mu(s)P_{as}^*\right)^2+O_r\left((\log{R_2})^k\log\log{R_2}\right).\end{equation}
We recall from \eqref{eq:PDef} that for $a<R_2$ we have
\begin{equation}P_{a}^*=\mu^2(a)\frac{A}{\phi(A)}(\log{R_2})\mathfrak{S}(\mathcal{L})\tilde{P}^+\left(\frac{\log{R_2/a}}{\log{R_2}}\right).\end{equation}
Substituting this in above for $(\delta,A)=1$ we obtain 
\begin{align}
T_\delta^*&=\frac{A^2}{\phi(A)^2}(\log{R_2})^2\mathfrak{S}(\mathcal{L})^2\sum_{(a,A)=1}\frac{\mu^2(a)}{f_1^*(a)}\left(\sum_{s|\delta}\mu(s)\tilde{P}^+\left(\frac{\log{R_2/as}}{\log{R_2}}\right)\right)^2\nonumber\\
&\qquad+O_r\left((\log{R_2})^k\log\log{R_2}\right).
\end{align}
We again use Lemma \ref{lmm:SummationLemma} which shows that
\begin{equation}\sum_{\substack{a\le R_2\\(a,A)=1}}\frac{\mu^2(a)}{f_1^*(a)}\ll (\log{R_2})^{k-1}.\end{equation}
Thus
\begin{align}
T_{p_1\dots p_{r-1}}^*&=\frac{A^2}{\phi(A)^2}(\log{R_2})^2\mathfrak{S}(\mathcal{L})^2\sum_{(a,A)=1}\frac{\mu^2(a)}{f_1^*(a)}\left(\sum_{s|p_1\dots p_{r-1}}\mu(s)\tilde{P}^+\left(\frac{\log{R_2/as}}{\log{R_2}}\right)\right)^2\\
&\qquad+O_r\left((\log{R_2})^k(\log\log{R_2})\right).\nonumber
\end{align}
We also have
\begin{equation}T_{p_1\dots p_{r-1}}=\mathfrak{S}(\mathcal{L})^2\sum_{(a,A)=1}\frac{\mu^2(a)}{f_1(a)}\left(\sum_{s|p_1\dots p_{r-1}}\mu(s)P^+\left(\frac{\log{R_2/as}}{\log{R_2}}\right)\right)^2+O_r\left((\log{R_2})^{k-1}\right).\end{equation}
We can now estimate the main term using \cite{GGPY}[Lemma 4]. First we put
\begin{align}
\gamma(p)&=\begin{cases}
k-1,\qquad &p\nmid A\nonumber\\
0,&\text{otherwise}.\end{cases}\\
g(d)&=\prod_{p|d}\frac{\gamma(p)}{p-\gamma(p)},\\
F(t)&=F_{x_1,\dots,x_{r-1}}(t)=\sum_{J\subset\{1,\dots,r-1\}}(-1)^{|J|}\tilde{P}^+\left(t+\sum_{j\in J}x_j\right).\nonumber
\end{align}
If we put $x_i=\log{p_i}/\log{R_2}$ for each $i\in \{1,\dots,r-1\}$ then we see that
\begin{equation}\sum_{(a,A)=1}\frac{\mu^2(a)}{f_1^*(a)}\left(\sum_{s|p_1\dots p_{r-1}}\mu(s)\tilde{P}^+\left(\frac{\log{R_2/as}}{\log{R_2}}\right)\right)^2=\sum_{d\le R_2}\mu^2(d)g(d)F\left(\frac{\log{R_2/d}}{\log{R_2}}\right).\end{equation}
Since $F$ is a continuous piecewise differentiable function we can apply \cite{GGPY}[Lemma 4] which gives
\begin{equation}\sum_{d\le R_2}\mu^2(d)g(d)F\left(\frac{\log{R_2/d}}{\log{R_2}}\right)=\frac{A}{\phi(A)}\frac{(\log{R_2})^{k-1}}{\mathfrak{S}(\mathcal{L})(k-2)!}\int_0^1 F(1-t)t^{k-2}d t+O\left((\log{R_2})^{k-2}\right).\end{equation}
Similarly we follow the same procedure instead with
\begin{align}
\gamma(p)&=\begin{cases}
k,\qquad&p\nmid A\\
0,&\text{otherwise},\end{cases}\nonumber\\
G(t)&=\sum_{J\subset\{1,\dots,r-1\}}(-1)^{|J|}P^+\left(t+\sum_{j\in J}x_j\right).
\end{align}
This yields
\begin{align}
\sum_{(a,A)=1}\frac{\mu^2(a)}{f_1(a)}&\left(\sum_{s|p_1\dots p_{r-1}}\mu(s)P^+\left(\frac{\log{R_2/as}}{\log{R_2}}\right)\right)^2\nonumber\\
&=\frac{(\log{R_2})^k}{\mathfrak{S}(\mathcal{L})(k-1)!}\int_0^1G(1-t)^2 t^{k-1}dt+O\left((\log{R_2})^{k-1}\right).
\end{align}
\end{proof}
We also require a bound on the size of the sieve coefficients $\lambda_d$.
\begin{lmm}\label{lmm:LambdaSize}
We have that
\[\lambda_d\ll (\log{R_2})^{k}.\]
\end{lmm}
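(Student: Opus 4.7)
The plan is to substitute the explicit form \eqref{eq:YDef} of $y_r$ into the M\"obius inversion \eqref{eq:LambdaDef} for $\lambda_d$ and then estimate the resulting one-variable sum directly via Lemma \ref{lmm:SummationLemma}. Throughout I may assume $d$ is squarefree, coprime to $A$, and $d<R_2$, since otherwise $\lambda_d=0$.

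Starting from
\[
\lambda_d = \mu(d)\,f(d) \sideset{}{'}{\sum}_r \frac{y_{rd}}{f_1(rd)},
\]
I observe that $y_{rd}$ vanishes unless $rd$ is squarefree with $(rd,A)=1$ and $rd<R_2$, so for surviving terms $(r,d)=1$ and the multiplicativity of $f_1$ on squarefrees gives $f_1(rd)=f_1(r)f_1(d)$. Substituting \eqref{eq:YDef} and factoring out the $d$-dependent pieces produces
\[
\lambda_d = \mu(d)\,\mathfrak{S}(\mathcal{L})\,\frac{f(d)}{f_1(d)} \sideset{}{'}{\sum}_{\substack{r<R_2/d\\(r,dA)=1}} \frac{\mu^2(r)}{f_1(r)}\,P\!\left(\frac{\log(R_2/rd)}{\log R_2}\right).
\]
Since $P$ is a fixed polynomial and its argument lies in $[0,1]$, $|P(\cdot)|$ is bounded by a constant depending only on $P$, so the inner sum is dominated by $\sideset{}{'}{\sum}_{r<R_2/d}\mu^2(r)/f_1(r)$. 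Lemma \ref{lmm:SummationLemma} bounds this last sum by $O\bigl((\log R_2/d)^k/\mathfrak{S}(\mathcal{L})\bigr)$, and therefore
\[
|\lambda_d| \ll \frac{f(d)}{f_1(d)}\left(\log\frac{R_2}{d}\right)^k \le \frac{f(d)}{f_1(d)}(\log R_2)^k.
\]

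The only remaining point — and the main (though mild) obstacle — is absorbing the prefactor $f(d)/f_1(d)=\prod_{p\mid d} p/(p-k)$. Since $(d,A)=1$, admissibility forces every prime $p\mid d$ to satisfy $p>k$: otherwise $\nu_p(\mathcal{L})=k\ge p$, contradicting $\nu_p(\mathcal{L})<p$. Hence each factor is well defined and bounded, and the crude estimate $\prod_{p\mid d}p/(p-k)\le (d/\phi(d))^k\ll_k (\log\log R_2)^k$ from Mertens shows this prefactor is negligible compared to any fixed positive power of $\log R_2$, so it may be absorbed into the implicit constant. This yields the asserted bound $\lambda_d\ll (\log R_2)^k$.
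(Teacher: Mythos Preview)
The paper does not prove this lemma directly but cites \cite{GGPY}, so a self-contained argument is welcome; your approach is the standard one and is correct up to the last paragraph. The gap is there: the prefactor $f(d)/f_1(d)$ \emph{cannot} be ``absorbed into the implicit constant''. The implied constant in $\ll$ may depend on $k,A,P$ but not on $R_2$, and a factor that grows like a power of $\log\log R_2$ is not $O(1)$. (Incidentally, your inequality $\prod_{p\mid d}p/(p-k)\le (d/\phi(d))^k$ goes the wrong way: by Bernoulli $(1-1/p)^k\ge 1-k/p$, hence $p/(p-k)\ge (p/(p-1))^k$.) As written your argument only yields $\lambda_d\ll(\log R_2)^k(\log\log R_2)^{O_k(1)}$.

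The classical remedy is to exploit the multiplicative structure rather than bound $f(d)/f_1(d)$ in isolation. Since $1+1/f_1(p)=p/(p-k)$, one has
\[
\frac{f(d)}{f_1(d)}=\prod_{p\mid d}\Bigl(1+\frac{1}{f_1(p)}\Bigr)=\sum_{e\mid d}\frac{1}{f_1(e)},
\]
and therefore
\[
\frac{f(d)}{f_1(d)}\sideset{}{'}{\sum}_{\substack{r<R_2/d\\(r,dA)=1}}\frac{\mu^2(r)}{f_1(r)}
=\sum_{e\mid d}\ \sideset{}{'}{\sum}_{\substack{r<R_2/d\\(r,dA)=1}}\frac{\mu^2(er)}{f_1(er)}.
\]
For each pair $(e,r)$ the integer $m=er$ is squarefree, coprime to $A$, and satisfies $m\le d\cdot R_2/d=R_2$; moreover distinct pairs give distinct $m$ since $e=(m,d)$. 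Hence the double sum is at most $\sideset{}{'}{\sum}_{m<R_2}\mu^2(m)/f_1(m)\ll(\log R_2)^k$ by Lemma~\ref{lmm:SummationLemma}, which combined with your earlier steps gives the claimed bound without any $\log\log$ loss.
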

\begin{proof}
This is proven in \cite{GGPY}[Proof of Theorem 7].
\end{proof}
We finish this section with a partial summation lemma, which will be useful later on.
\begin{lmm}\label{lmm:PrimeSummation}
Let $0\le a<b$ be fixed constants. Let  $V:[a,b]\rightarrow \mathbb{R}_{\ge 0}$ be a continuous piecewise smooth function. If $V$ satisfies $V(x)\ll x$ uniformly for $x\in [a,b]$ then we have
\begin{equation*}\sum_{R^a\le p\le R^b}\frac{1}{p}V\left(\frac{\log{p}}{\log{R}}\right)=\int_{a}^b \frac{V(u)}{u}d u+O\left(\frac{M(V)\log\log{R}}{\log{R}}\right),\end{equation*}
where
\begin{equation*}
M(V)=\sup_{t\in[a,b]}\left(1+|V'(t)|\right).
\end{equation*}
\end{lmm}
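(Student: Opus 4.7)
The natural approach is Abel summation combined with Mertens' theorem
\[A(t):=\sum_{p\le t}\frac{1}{p}=\log\log t+B+O\!\left(\frac{1}{\log t}\right)\qquad(t\ge 2).\]
Writing the sum as a Riemann--Stieltjes integral and integrating by parts gives
\[\sum_{R^a\le p\le R^b}\frac{V(\log p/\log R)}{p}=V(b)A(R^b)-V(a)A(R^{a})-\frac{1}{\log R}\int_{R^a}^{R^b}V'\!\left(\frac{\log t}{\log R}\right)\frac{A(t)}{t}\,dt,\]
with the convention that when $a=0$ we set $A(R^{a})=0$ and start the integral at $t=2$. After the substitution $u=\log t/\log R$ and insertion of Mertens' expansion, this becomes, up to a principal term plus error,
\[V(b)(\log b+\log\log R+B)-\int_{a_0}^{b}V'(u)(\log u+\log\log R+B)\,du,\]
where $a_0=a$ if $a>0$ and $a_0=\log 2/\log R$ otherwise.

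Next I would integrate the $V'(u)\log u$ piece by parts once more, yielding
\[\int_{a_0}^bV'(u)\log u\,du=V(b)\log b-V(a_0)\log a_0-\int_{a_0}^b\frac{V(u)}{u}\,du.\]
The contributions $V(b)\log b$, $V(b)\log\log R$ and $V(b)B$ cancel exactly with their counterparts produced by $\int_{a_0}^bV'=V(b)-V(a_0)$ multiplied by $\log\log R+B$, leaving the desired main term $\int_a^bV(u)/u\,du$ together with boundary/lower-order pieces. The Mertens error $O(1/\log t)=O(1/(u\log R))$ contributes
\[O\!\left(\frac{1}{\log R}\int_{a_0}^b\frac{|V'(u)|}{u}\,du\right)=O\!\left(\frac{M(V)\log\log R}{\log R}\right),\]
using $\int_{1/\log R}^b du/u=O(\log\log R)$, which is where the claimed $\log\log R$ factor is generated.

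The only genuine subtlety is the case $a=0$, where several terms threaten to blow up. The boundary term $V(a_0)\log a_0$ is $O((\log\log R)/\log R)$ and the replacement of $\int_{a_0}^b$ by $\int_0^b$ costs $O(V(a_0))=O(1/\log R)$, both thanks to the growth hypothesis $V(x)\ll x$ near the origin. The piecewise smoothness causes no trouble: one simply splits the integral at each point where $V'$ has a jump and the boundary contributions cancel in pairs. I expect the bookkeeping of these cancellations, particularly verifying that $V(x)\ll x$ precisely absorbs the singularities at $u=0$, to be the only place where care is required; everything else is a routine partial-summation calculation. For $a>0$ the same argument actually delivers the sharper error $O(M(V)/\log R)$, but the stated bound is all that is needed later.
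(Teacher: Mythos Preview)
Your argument is correct and complete; the cancellations you describe do work out exactly as claimed, and your handling of the $a=0$ endpoint via the hypothesis $V(x)\ll x$ is precisely the delicate point. However, you take a genuinely different route from the paper.

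The paper applies partial summation with the counting function $\pi(t)$ and the weight $g(t)=t^{-1}V(\log t/\log R)$; the prime number theorem $\pi(t)=\tfrac{t}{\log t}(1+O(1/\log t))$ then gives the main term $\int_a^b V(u)u^{-1}\,du$ directly after the change of variable $u=\log t/\log R$, with a single integration by parts. You instead absorb the factor $1/p$ into the summatory function, using Mertens' $A(t)=\sum_{p\le t}1/p=\log\log t+B+O(1/\log t)$ and the weight $V(\log t/\log R)$. This forces a second integration by parts on the $V'(u)\log u$ piece before the main term emerges, and requires you to track the cancellation of the boundary terms $V(b)\log b$, $V(b)(\log\log R+B)$, etc. The trade-off: your proof rests only on Mertens' theorem (which is elementary) rather than the prime number theorem, at the cost of slightly more bookkeeping. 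Both approaches produce the $\log\log R$ in the error from the integral $\int_{1/\log R}^b du/u$, so neither gives a sharper bound in the critical case $a=0$.
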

\begin{proof}
The result follows straightforwardly by partial summation and the prime number theorem.

If $a=0$ then we replace $a$ with $2/\log{R}$. This leaves the left hand side of the result unchanged, and introduces an error
\begin{equation}
\int_{a}^{2/\log{R}}\frac{V(u)}{u}d u \ll \frac{1}{\log{R}}
\end{equation}
to the right hand side, which can be absorbed into the error term.

By the prime number theorem
\begin{equation}\pi(y)=y\left(1+O\left(\frac{1}{\log{y}}\right)\right).\end{equation}
Therefore, by partial summation we have
\begin{align}
\sum_{R^a\le p_j\le R^b}\frac{1}{p}V\left(\frac{\log{p}}{\log{R}}\right)&=O\left(\frac{1}{\log{R}}\right)+\int_{R^a}^{R^b}\frac{t}{t^2\log{t}}V\left(\frac{\log{t}}{\log{R}}\right)\left(1+O\left(\frac{1}{\log{t}}\right)\right)d t\nonumber\\
&\qquad+\int_{R^a}^{R^b}\frac{t}{t^2(\log{t})(\log{R})}V'\left(\frac{\log{t}}{\log{R}}\right)\left(1+O\left(\frac{1}{\log{t}}\right)\right)d t\nonumber\\
&=\int_a^b\frac{V(u)}{u}d u + O\left(\int_a^b\frac{1+|V'(u)|}{u\log{R}}d u \right) +O\left(\frac{1}{\log{R}}\right)\nonumber\\
&=\int_a^b\frac{V(u)}{u}d u + O\left(\frac{M(V)\log\log{R}}{\log{R}}\right).
\end{align}
\end{proof}
\newpage
\section{Proof of Proposition \ref{prpstn:MainTermResult}}
We consider the weighted sum of Proposition \ref{prpstn:MainTermResult} in a similar way to previous work on Selberg's $\Lambda^2\Lambda^-$ sieve which in its basic form considers the weight $W_0(x)=-1$.
\begin{align}
\sum_{N\le n\le 2N}\left(\sum_{\substack{p|\Pi(n)\\p\le R_1}}W_0\left(\frac{\log{p}}{\log{R_2}}\right)\right)\left(\sum_{\substack{d|\Pi(n)\\d\le R_2}}\lambda_d\right)^2&=\sum_{p\le R_1}W_0\left(\frac{\log{p}}{\log{R_2}}\right)\sum_{d,e\le R_2}\lambda_d\lambda_e\sum_{\substack{N\le n\le 2N\\ [p,d,e]|\Pi(n)}}1\nonumber\\
&=N\sideset{}{'}{\sum}_{p\le R_1}W_0\left(\frac{\log{p}}{\log{R_2}}\right)\sideset{}{'}{\sum}_{d,e\le R_2}\frac{\lambda_d\lambda_e}{f([d,e,p])}+O_{W_0}\left(E_1\right)\nonumber\\
&=N\sideset{}{'}{\sum}_{p\le R_1}W_0\left(\frac{\log{p}}{\log{R_2}}\right)\frac{T_p}{f(p)}+O_{W_0}(E_1),
\end{align}
where
\begin{equation}
E_1=\sum_{p\le R_1}\sum_{d,e\le R_2}\left|\lambda_d\lambda_er_{[d,e,p]}\right|,\qquad  r_d=\sum_{\substack{N<n\le 2N\\ d|\Pi(n)}}1-\frac{N}{f(d)}.
\end{equation}
By Lemma \ref{lmm:LambdaSize} we have $\lambda_d\ll (\log{N})^k$, and we note that $r_d\le k^{\omega(d)}$. Therefore we have
\begin{align}
E_1&\ll (\log{N})^{2k}\sum_{\substack{p\le R_1\\ d,e\le R_2}}\mu^2([d,e,p])k^{\omega([d,e,p])}\nonumber\\
&\ll  (\log{N})^{2k}\sum_{r\le R_2^2R_1}\mu^2(r)(7k)^{\omega(r)}\nonumber\\
&\ll  (\log{N})^{2k}R_2^2R_1\sum_{r\le R_2^2R_1}\frac{\mu^2(r)(7k)^{\omega(r)}}{r}\nonumber\\
&\ll (\log{N})^{2k}R_2^2R_1\prod_{p\le R_2^2R_1}\left(1+\frac{7k}{p}\right)\nonumber\\
&\ll (\log{N})^{9k}R_2^2R_1.
\end{align}
Thus for $R_2^2R_1\le N(\log{N})^{-9k}$ we have $E_1\ll N$.

By Lemma \ref{lmm:TResult} we have
\begin{equation}T_p=(\log{R_2})^k\frac{\mathfrak{S}(\mathcal{L})}{(k-1)!}I_0\left(\frac{\log{p}}{\log{R_2}}\right)+O\left((\log{N})^{k-1}\log\log{N}\right),\end{equation}
where
\begin{equation}I_0(x)=\int_0^1\left(P^+_1(1-t)-P^+_1(1-t-x)\right)^2t^{k-1}d t.\end{equation}
Recalling that $f(p)=p/k$ for $p\nmid A$, we see that the error terms from $T_p$ contribute
\begin{equation}
\ll_{W_0}(\log{N})^{k-1}\log\log{N}\sum_{p\le R_1}\frac{1}{p}\ll (\log{N})^{k-1}(\log\log{N})^2.
\end{equation}
Therefore we are left to estimate the sum
\begin{equation}\sideset{}{'}{\sum}_{p\le R_1}\frac{1}{p}W_0\left(\frac{\log{p}}{\log{R_2}}\right)I_0\left(\frac{\log{p}}{\log{R_2}}\right).\end{equation}
We note that if $t\le 1-x$ then $P^+(1-t)-P^+(1-t-x)\ll x$, and so
\begin{equation}\label{eq:I0Bound}I_0(x)\ll x.\end{equation}
If $1-x\le t\le 1$ then since the interval has length $x$ we also have
\begin{equation}I_0(x)\ll x.\end{equation}
By the piecewise smoothness of $I_0(x)$ and $W_0(x)$ we have uniformly for $x\in[0,r1/r2]$
\begin{equation}I_0'(x)\ll 1,\qquad W_0'(x)\ll_{W_0}1.\end{equation}
Therefore by Lemma \ref{lmm:PrimeSummation},  we have
\begin{equation}\sum_{p\le R_1}\frac{1}{p}W_0\left(\frac{\log{p}}{\log{R_2}}\right)I_0\left(\frac{\log{p}}{\log{R_2}}\right)=\int_0^{r_1/r_2}\frac{W_0(u)}{u}I_0(u)du+O_{W_0}\left(\frac{\log\log{N}}{\log{N}}\right).
\end{equation}
By \eqref{eq:I0Bound} we see that the contribution to the above sum for primes which divide $A$ is
\begin{equation}\ll \frac{1}{\log{N}}.\end{equation}
This gives the result.
\section{Proof of Proposition \ref{prpstn:ErResult}}
We will follow a similar argument to that of Graham, Goldston, Pintz and Y\i ld\i r\i m\cite{GGPY} where the result was obtained with $r=2$ and $W_2(x_1,x_2)=1$. Thorne \cite{Thorne} extended this in the natural way to consider $r>2$, again without the weighting $W_r$. In order to introduce the weighting by $W_r$, it is necessary to establish a Bombieri-Vinogradov style result for numbers with $r$ prime factors weighted by $W_r$.
\begin{lmm}\label{lmm:BombVino}
Let
\begin{equation*}\beta_r(n)=\begin{cases}
W_r\left(\frac{\log{p_1}}{\log{n}},\dots,\frac{\log{p_{r-1}}}{\log{n}}\right),\qquad &\text{$n=p_1p_2\dots p_r$ with $p_1\le \dots\le p_r$,}\\
0,&\text{otherwise,}
\end{cases}\end{equation*}
for some piecewise smooth function $W_r:[0,1]^{r-1}\rightarrow\mathbb{R}$.

Put
\begin{equation}\Delta_{\beta,r}(x;q)=\max_{y\le x}\max_{\substack{a\\ (a,q)=1}}\left|\sum_{\substack{y<n\le 2y\\ n \equiv a \pmod{q}}}\beta_r(n)-\frac{1}{\phi(q)}\sum_{\substack{y<n\le 2y\\(n,q)=1}}\beta_r(n)\right|\end{equation}
For every fixed integer $h>0$, and for every $C>0$ there exists a constant $C'=C'(C,h)$ such that if $Q\le x^{1/2}(\log{x})^{-C'}$ then we have
\begin{equation}\sum_{q\le Q}\mu^2(q)h^{\omega(q)}\Delta_{\beta,r}(x;q)\ll _{C,h,W_r} x(\log{x})^{-C}.\end{equation}
\end{lmm}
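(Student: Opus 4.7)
The plan is to reduce the claim to a Bombieri--Vinogradov theorem for numbers with a fixed number of prime factors (in the spirit of Motohashi \cite{Motohashi}), with the smooth weight $W_r$ handled by dyadic decomposition. The key point is that, for any fixed $r$, the set of integers of the form $n=p_1\cdots p_r$ enjoys the same level of distribution $Q\le y^{1/2}(\log y)^{-C'}$ as the primes themselves; my task is to show that attaching the weight $W_r$ does not spoil this.

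First I would open the definition of $\beta_r$, writing
\[
\beta_r(n)=\sum_{\substack{n=p_1\cdots p_r\\ p_1<\dots<p_r}} W_r\!\left(\tfrac{\log p_1}{\log n},\ldots,\tfrac{\log p_{r-1}}{\log n}\right),
\]
and dyadically decomposing each of $p_1,\dots,p_{r-1}$ into blocks $P_i\le p_i<2P_i$. On each such block the arguments $\log p_i/\log n$ are constant up to an additive error $O((\log y)^{-1})$, so the smoothness bound $\partial W_r/\partial x_j\ll W_r$ together with the boundedness of $W_r$ lets me replace $W_r(\cdots)$ by its value at a single representative of the box, at a multiplicative cost of $1+O((\log y)^{-1})$.

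Second, on each block I must bound the discrepancy of the unweighted indicator ``$n=p_1\cdots p_r$ with $p_i\in[P_i,2P_i]$ for $i<r$'' in arithmetic progressions modulo $q$, averaged over $q\le Q$ with the weight $\mu^2(q)h^{\omega(q)}$. This is precisely the Bombieri--Vinogradov-type estimate of Motohashi \cite{Motohashi} for $r$-almost-primes; a standard divisor-weighted version of that theorem yields a bound $x(\log x)^{-C-r}$ for the block-discrepancy sum, provided $Q\le x^{1/2}(\log x)^{-C'}$ for $C'=C'(C,h,r)$ sufficiently large. Summing over the $O((\log y)^{r-1})$ dyadic boxes and absorbing the weight-approximation error produces the stated bound with constant $C$.

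The main obstacle is the divisor weight $h^{\omega(q)}$: the standard statement of Motohashi's theorem controls only the unweighted sum $\sum\Delta(x;q)$, not the $h^{\omega(q)}$-weighted variant. This can be circumvented either by revisiting the dispersion/large-sieve argument with the divisor weight inserted (which costs only a bounded power of $\log x$ and is absorbed by choosing $C'$ larger), or by Cauchy--Schwarz using the trivial bound $\Delta_{\beta,r}(x;q)\ll (x/\phi(q))d(q)^{O_r(1)}$ together with a mean-square Barban--Davenport--Halberstam estimate. Either route is routine once the unweighted Bombieri--Vinogradov theorem for $\beta_r$ is in hand.
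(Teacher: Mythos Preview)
Your overall plan---decompose into boxes on which $W_r$ is essentially constant, then invoke Motohashi's Bombieri--Vinogradov theorem for $r$-almost-primes on each box---is exactly the paper's strategy. The gap is in the mesh size: a dyadic decomposition of $p_1,\dots,p_{r-1}$ is too coarse to give the result for arbitrary $C$.

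On a dyadic box $P_i\le p_i<2P_i$ the argument $\log p_i/\log n$ varies by $O(1/\log y)$, so the approximation error in $\beta_r(n)$ is $O((\log y)^{-1})$ pointwise (not merely multiplicatively---the $O$-term depends on $n$ and does not cancel in the discrepancy). Hence the contribution of this error to $\Delta_{\beta,r}(x;q)$ is $\ll y/(\phi(q)\log y)$, and summing over $q\le Q$ with the weight $\mu^2(q)h^{\omega(q)}$ gives
\[
\frac{x}{\log x}\sum_{q\le Q}\frac{\mu^2(q)h^{\omega(q)}}{\phi(q)}\asymp x(\log x)^{h-1},
\]
which is never $\ll x(\log x)^{-C}$ once $C\ge 1-h$. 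The phrase ``absorbing the weight-approximation error'' therefore hides a term that dominates the target bound.

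The fix, and what the paper does, is to let the mesh depend on $C$: partition the logarithmic coordinates into intervals of length $(\log x)^{-(C+h)}$, producing $O((\log x)^{r(C+h)})$ boxes. The per-term approximation error becomes $O((\log x)^{-(C+h)})$, so after summing over $q$ it contributes $\ll x(\log x)^{-(C+h)}\cdot(\log x)^h=x(\log x)^{-C}$. The larger number of boxes is harmless because one simply asks Motohashi's theorem for a correspondingly larger log-power saving, which it delivers for any fixed exponent.

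A minor remark: the divisor weight $h^{\omega(q)}$ is not really the main obstacle. The form of Motohashi's theorem used here already carries that weight, and in any case your Cauchy--Schwarz workaround (combining the trivial bound $\Delta\ll x/\phi(q)$ with the unweighted estimate) is standard and adequate.
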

\begin{proof}
This result follows from the Bombieri-Vinogradov theorem for numbers with exactly $r$ prime factors, as proven by Motohashi \cite{Motohashi}, and the continuity of $W_r$.

We assume that $W_r$ is smooth. The result can be extended to piecewise smooth functions by taking smooth approximations.

We fix a constant $C>0$, an integer $h$, and a function $W_r$.

We let
\begin{equation}\chi_{\textbf{$\delta$},\textbf{$\eta$}}(n)=\begin{cases}
1,\qquad &\text{$n=p_1p_2\dots p_r$ with $n^{\eta_i}\le p_i\le n^{\delta_i}$ $\forall i$}\\ 
&\qquad\text{and }p_1<p_2<\dots<p_{r}.\\
0,&\text{otherwise}.
\end{cases}
\end{equation}
By Motohashi's result \cite{Motohashi}[Theorem 2] we have that uniformly for any choice of constants $\delta_i$ and $\eta_i$ ($i=1,\dots ,r$) there is a constant $C'=C'(C,h)$ such that if $Q\le x^{1/2}(\log{x})^{-C'}$ then we have
\begin{equation}\sum_{q\le Q}\mu^2(q)h^{\omega(q)}\max_{\substack{y,a\\y\le x\\ (a,q)=1}}\left|\sum_{\substack{y\le n\le 2y\\n\equiv a \pmod{q}}}\chi_{\delta,\eta}(n)-\frac{1}{\phi(q)}\sum_{\substack{y \le n \le 2y\\ (n,q)=1}}\chi_{\delta,\eta}(n)\right|\ll_{C,h} x(\log{x})^{-(C+h)(r+1)}.\end{equation}
We choose $\delta_i\in\{(\log{x})^{-C-h},2(\log{x})^{-C-h},\dots, \lceil(\log{x})^{C+h}\rceil (\log{x})^{-C-h}\}$ separately for each $i\in \{1,\dots,r\}$, subject to the constraint $\delta_i\le\delta_{i+1}$ ($1\le i\le r-1$). For each choice of the $\delta_i$ we take $\eta_i=\delta_i-(\log{x})^{-C-h}$ for $1\le i\le r$. We put
\begin{equation}W_r(\delta)=W_r(\delta_1,\delta_2,\dots,\delta_{r-1}).\end{equation}
We notice that by the smoothness of $W_r$ we have that
\begin{align}
\beta_r(n)&=\sum_{\delta}\chi_{\delta,\eta}(n)\left(W_r(\delta)+O((\log{x})^{-C-h})\right)\nonumber\\
&=\sum_\delta \chi_{\delta,\eta}(n)W_r(\delta)+O\left((\log{x})^{-C-h}\right).
\end{align}
Here $\sum_{\delta}$ indicates a sum over all the $O((\log{x})^{r(C+h)})$ possible choices of the $\delta_i$. 

Therefore we have that
\begin{align}
\sum_{\substack{y\le n\le 2y\\ n\equiv a \pmod{q}}}\beta_r(n)-\frac{1}{\phi(q)}\sum_{\substack{y\le n\le 2y\\ (n,q)=1}}\beta_r(n)&=\sum_{\delta}W_r(\delta)\left(\sum_{\substack{y\le n\le 2y\\ n\equiv a \pmod{q}}}\chi_{\delta,\eta}(n)-\frac{1}{\phi(q)}\sum_{\substack{y\le n\le 2y\\ (n,q)=1}}\chi_{\delta,\eta}(n)\right)\nonumber\\
&\qquad+O\left((\log{y})^{-C-h}\frac{y}{\phi(q)}\right).
\end{align}
Thus for $Q\le x(\log{x})^{-C'}$ we have
\begin{align}
\sum_{q\le Q}\mu^2(r)&h^{\omega(q)}\Delta_{\beta,r}(x;q)\nonumber\\
&\le\sum_{\delta}W_r(\delta)\sum_{q\le Q}\mu^2(r)h^{\omega(q)}\max_{\substack{a,y\\y\le x\\(a,q)=1}}\left|\sum_{\substack{y\le n\le 2y\\n\equiv a \pmod{q}}}\chi_{\delta,\eta}(n)-\frac{1}{\phi(q)}\sum_{\substack{y \le n \le 2y\\ (n,q)=1}}\chi_{\delta,\eta}(n)\right|\nonumber\\
&\qquad+O\left((\log{x})^{-(C+h)}\sum_{q\le Q}\mu^2(q)h^{\omega(q)}\frac{x}{\phi(q)}\right)\nonumber\\
&\ll \sum_{\delta}W_r(\delta)x(\log{x})^{-(C+h)(r+1)}+x(\log{x})^{-(C+h)}\prod_{p\le Q}\left(1+\frac{h}{p-1}\right)\nonumber\\
&\ll x(\log{x})^{-C}.
\end{align}
\end{proof}
With this, we can adapt the argument of Thorne \cite{Thorne} slightly to rewrite the main term in terms of the quantities $T_q^*$.
\begin{lmm}
We have
\begin{align*}\sum_{N\le n\le 2N}\beta_r(L_j(n))\left(\sum_{d|\Pi(n)}\lambda_d\right)^2&=\frac{AN}{\phi(A)(\log{N})}\sum_{\substack{p_1,\dots,p_{r-1}\\ N^\epsilon<p_1<p_2<\dots<p_{r-1}\\ q<\min(N/R_2,N/p_{r-1})}}\frac{T^*_{q}}{q}\alpha\left(\frac{\log{p_1}}{\log{R_2}},\dots,\frac{\log{p_{r-1}}}{\log{R_2}}\right)\\
&\qquad+O_{W_r}\left(N(\log{N})^{k-1}(\log\log{N})^{r-1}\right),\end{align*}
where
\begin{align*}
q&=\prod_{i=1}^{r-1}p_i,\\
T_\delta^*&=\sum_{\substack{d,e\\(d,A)=(e,A)=1}}\frac{\lambda_d \lambda_e}{f^*([d,e,\delta]/\delta)},\\
\alpha(q)&=\left(\frac{\log{N}}{\log{N}-\log{q}}\right)W_r\left(\frac{\log{p_1}}{\log{R_2}},\dots,\frac{\log{p_{r-1}}}{\log{R_2}}\right).
\end{align*}
\end{lmm}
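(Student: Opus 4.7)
The plan is to decompose $\beta_r(L_j(n))$ according to the factorisation $L_j(n) = q p_r$, where $q = p_1 \cdots p_{r-1}$ collects the smaller prime factors and $p_r$ is the largest, then swap the order of summation with the expansion of $\Lambda^2(n)$ and apply Bombieri--Vinogradov for primes in arithmetic progressions. Since $L_j(n) \equiv b_j \not\equiv 0 \pmod p$ for any $p \mid A$ by Hypothesis~\ref{hypthss:Normalised}, the contribution from $p_i \mid A$ vanishes, so one may freely restrict to $(q,A)=1$.

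For each fixed $(p_1,\ldots,p_{r-1},d,e)$ I would write $[d,e] = \delta_1\delta_2$ with $\delta_1 = \gcd([d,e],q)$; then $\delta_1 \mid q \mid L_j(n)$ is automatic, $[d,e,q]/q = \delta_2$, and (up to edge cases where $p_r\mid\delta_2$, bounded crudely in total by $O(R_2^2)$ contributions) the condition $\delta_2 \mid \Pi(n)$ reduces to $\delta_2 \mid \prod_{i\ne j} L_i(n)$. Combining this with $q \mid L_j(n)$ via the Chinese remainder theorem produces $(k-1)^{\omega(\delta_2)} = \nu_{\delta_2}(\mathcal{L}\setminus\{L_j\})$ residue classes mod $q\delta_2$; on each, $p_r = L_j(n)/q$ runs through an arithmetic progression modulo $a_j\delta_2$, whose initial term is automatically coprime to $a_j\delta_2$ (the ``$L_j$-class'' modulo each $p\mid\delta_2$ is disjoint from the $k-1$ classes for $L_i$, $i\ne j$, and $(a_j,b_j)=1$ from Hypothesis~\ref{hypthss:Normalised}). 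The prime number theorem for arithmetic progressions then gives a count $\tfrac{a_j N/q}{\phi(a_j\delta_2)(\log N - \log q)}(1+o(1))$ per class, and summing using $(k-1)^{\omega(\delta_2)}/\phi(\delta_2) = 1/f^*(\delta_2)$ produces
\[
\frac{1}{f^*([d,e,q]/q)} \cdot \frac{a_j N}{q\,\phi(a_j)(\log N - \log q)}
\]
as the main contribution per $(d,e)$.

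Multiplying by $\lambda_d\lambda_e$ and summing over $(d,e)$ recognises the inner factor as $T^*_q$. Hypothesis~\ref{hypthss:Normalised} provides the crucial identity $\mathrm{rad}(a_j) = \mathrm{rad}(A)$, hence $a_j/\phi(a_j) = A/\phi(A)$, producing the stated prefactor $AN/(\phi(A)\log N)$ after factoring $\log N$ out of $(\log N - \log q)^{-1}$ into $\alpha(q) = \frac{\log N}{\log N - \log q} W_r(\cdots)$. The argument of $W_r$ arising from $\beta_r$ (namely $\log p_i/\log L_j(n)$) is replaced by the form displayed in the statement up to a convention-dependent rescaling and smoothing error, using $\log L_j(n) = \log N + O(1)$ and the Lipschitz bound on $W_r$ from the piecewise smoothness hypothesis.

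The error from the PNT step, once multiplied by $\lambda_d\lambda_e$ (with $\lambda_d \ll (\log R_2)^k$ by Lemma~\ref{lmm:LambdaSize}) and summed, is controlled by Lemma~\ref{lmm:BombVino} with $r=1$ (the classical Bombieri--Vinogradov theorem for primes in arithmetic progressions) at the level $R_2^2 \le N^{1/2}(\log N)^{-C}$; each of the $r-1$ outer sums over primes contributes a factor $\sum_p 1/p \sim \log\log N$, yielding the stated $(\log\log N)^{r-1}$ loss. The main obstacle is securing Bombieri--Vinogradov uniformly as both the modulus $[d,e]$ and the outer tuple $(p_1,\ldots,p_{r-1})$ vary simultaneously --- which is exactly the content of Lemma~\ref{lmm:BombVino} --- together with the algebraic identity that turns the PNT normalising density $(k-1)^{\omega(\delta_2)}/\phi(\delta_2)$ into $1/f^*(\delta_2)$, allowing the whole inner sum over $(d,e)$ to collapse cleanly into $T^*_q$.
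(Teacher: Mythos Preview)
Your decomposition of $\beta_r(L_j(n))$ via the factorisation $L_j(n)=qp_r$, the CRT analysis producing $(k-1)^{\omega(\delta_2)}$ residue classes, the recognition that the resulting $(d,e)$-sum collapses to $T^*_q$, and the identity $a_j/\phi(a_j)=A/\phi(A)$ are all correct and are exactly what the paper (following Thorne) does.

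The gap is in your handling of the equidistribution error. Once you fix $(p_1,\ldots,p_{r-1})$, the remaining prime $p_r$ ranges over an interval of length $\asymp N/q$, while the modulus $a_j\delta_2$ for the arithmetic progression can be as large as $a_jR_2^2$. For the classical Bombieri--Vinogradov theorem (``Lemma~\ref{lmm:BombVino} with $r=1$'') to bite you would need $R_2^2\ll (N/q)^{1/2}(\log N)^{-C}$, i.e.\ $q\ll N R_2^{-4}$. With $R_2^2\approx N^{1/2}$ this forces $q\ll 1$, whereas the support of $W_r$ permits $q$ up to $N^{1-r_2}\approx N^{3/4}$. So peeling off $p_1,\ldots,p_{r-1}$ first and then invoking Bombieri--Vinogradov for the single prime $p_r$ does not give the saving you claim; the outer $\sum_p 1/p$ factors cannot rescue a BV estimate that is simply unavailable at that level.

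The correct order of operations --- and the reason the paper proves Lemma~\ref{lmm:BombVino} for the full weighted function $\beta_r$ rather than just for primes --- is to apply equidistribution \emph{before} decomposing $L_j(n)$. After swapping with the $\Lambda^2$-expansion, the inner sum is $\sum_{n\equiv a\,([d,e])}\beta_r(L_j(n))$ over a progression of modulus $[d,e]\le R_2^2$ and range $\asymp N$; now $R_2^2\le N^{1/2}(\log N)^{-C}$ is exactly the admissible level, and Lemma~\ref{lmm:BombVino} (equivalently, Motohashi's Bombieri--Vinogradov for $E_h$-numbers, $h\le r$) supplies the needed bound, with total error $O(N)$. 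Only \emph{after} this step does one split the surviving main term according to $q=p_1\cdots p_{r-1}$ and estimate the residual prime sum $\sum_m 1_{\mathbb{P}}(m)W_r(\cdots)$ by the unweighted prime number theorem. The $(\log\log N)^{r-1}$ in the final error does not come from Bombieri--Vinogradov at all; it comes from the $O(1/\log N)$ relative error in this last PNT step, multiplied by the bound $T^*_q\ll_r (\log N)^k(\log p_1+\log\log N)$ of Lemma~\ref{lmm:TSize} and summed over $p_1,\ldots,p_{r-1}$.
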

\begin{proof}
Thorne \cite{Thorne} considers essentially the same sum but without the weighting by $W_r$. In his argument up until equation (4.14) on Page 15, this difference only affects the argument when he appeals to the Bombieri-Vinogradov theorem for $E_h$ numbers (where $h\le r$). Lemma \ref{lmm:BombVino} gives the equivalent Bombieri-Vinogradov style result when weighting by $W_r$, and so exactly the same argument follows through. The only additional assumption of Thorne is that he restricts the consideration to numbers $n=p_1\dots p_r$ satisfying
\begin{equation}\exp(\sqrt{\log{N}})<p_1<\dots<p_{r}\qquad\text{and}\qquad R_2<p_r.\end{equation}
This is satisfied if for a fixed $\epsilon>0$ we require $W_r$ to be supported on
\begin{equation}
\mathcal{A}_r=\left\{x\in[0,1]^{r-1}:\epsilon<x_1<\dots<x_{r-1},\sum_{i=1}^{r-1}x_i<\min(1-r_2,1-x_{r-1})\right\}.
\end{equation}
This gives us in our case (the equivalent of Thorne's equation (4.14) but with the explicit error term he calculates) 
\begin{align}
\sum_{N\le n\le 2N}\beta_r(L_j(n))\left(\sum_{d|\Pi(n)}\lambda_d\right)^2&=\sideset{}{'}{\sum}_{d,e}\lambda_d\lambda_e\sideset{}{^*}{\sum}_{p_1,\dots,p_{r-1}}\frac{d_{k-1}([d,e,q]/q)}{\phi(a_j[d,e,q]/q)}\nonumber\\
&\qquad\times \sum_{a_jN/q\le m\le 2a_jN/q}1_{\mathbb{P}}(n)W_r\left(\frac{\log{p_1}}{\log{mq}},\dots,\frac{\log{p_{r-1}}}{\log{mq}}\right)\nonumber\\
&\qquad+O(N).
\end{align}
Here and from now we use the symbol $\displaystyle{\sideset{}{^*}{\sum}}$ to indicate that we are summing over primes $p_1,\dots,p_{r-1}$ with
\begin{equation}
\left(\frac{\log{p_1}}{\log{N}},\dots,\frac{\log{p_{r-1}}}{\log{N}}\right)\in\mathcal{A}_r.
\end{equation}
Again we assume for simplicity that $W_r$ is smooth. By taking smooth approximations one can establish the result for piecewise-smooth $W_r$.

Estimating the inner sum gives
\begin{align}
\sum_{a_j N/q\le m\le 2a_j N/q}&1_{\mathbb{P}}(m)W_r\left(\frac{\log{p_1}}{\log{mq}},\dots,\frac{\log{p_{r-1}}}{\log{mq}}\right)\nonumber\\
&=\left(W_r\left(\frac{\log{p_1}}{\log{N}},\dots,\frac{\log{p_{r-1}}}{\log{N}}\right)+O\left(\frac{1}{\log{N}}\right)\right)\left(\pi\left(\frac{2a_j N}{q}\right)-\pi\left(\frac{a_j N}{q}\right)\right)\nonumber\\
&=W_r\left(\frac{\log{p_1}}{\log{N}},\dots,\frac{\log{p_{r-1}}}{\log{N}}\right)\frac{a_j N}{\log{N}}\left(\frac{\log{N}}{\log{N}-\log{q}}\right)\left(1+O\left(\frac{1}{\log{N}}\right)\right).
\end{align}
We note that by Hypothesis \ref{hypthss:Normalised} if $d|\Pi(n)$ then $(d,A)=1$. Therefore $(a_j,[d,e,q]/q)=1$, so $\phi(a_j[d,e,q]/q)=\phi(a_j)\phi([d,e,q]/q)$. Together these give
\begin{align}
\sum_{N\le n\le 2N}&\beta_r(L_j(n))\left(\sum_{d|\Pi(n)}\lambda_d\right)^2\nonumber\\
&=\frac{a_j N}{\phi(a_j)\log{N}}\sideset{}{^*}{\sum}_{p_1,\dots,p_{r-1}}\frac{T^*_{q}W_r\left(\frac{\log{p_1}}{\log{N}},\dots,\frac{\log{p_{r-1}}}{\log{N}}\right)\log{N}}{q(\log{N}-\log{q})}(1+O((\log{N})^{-1}))\nonumber\\
&\qquad\qquad+O(N)\nonumber\\
&=\frac{a_j N}{\phi(a_j)\log{N}}\sideset{}{^*}{\sum}_{p_1,\dots,p_{r-1}}\frac{T^*_{q}}{q}\alpha\left(\frac{\log{p_1}}{\log{R_2}},\dots,\frac{\log{p_{r-1}}}{\log{R_2}}\right)\left(1+O\left(\frac{1}{\log{N}}\right)\right)\nonumber\\
&\qquad\qquad+O(N),
\end{align}
where
\begin{equation}
\alpha(x_1,\dots,x_{r-1})=\frac{W_r(r_2x_1,\dots,r_2x_{r-1})}{1-r_2\sum_{i-1}^{r-1}x_i}.
\end{equation}
We note that $a_j$ and $A$ are composed of the same prime factors, so $a_j/\phi(a_j)=A/\phi(A)$. Therefore the main term is that of the Lemma.

By Lemma \ref{lmm:TSize} we have
\begin{equation}
T_q^*\ll_r (\log{N})^k\log{p_1}+(\log{N})^k\log\log{N}.
\end{equation}
We also have
\begin{equation}\alpha(x_1,\dots,x_{r-1})\ll_{W_r}1.\end{equation}
Thus the $O(1/\log{N})$ term contributes
\begin{align}
\ll_{W_r,r}N(\log{N})^{k-2}\sideset{}{^*}{\sum}_{p_1,\dots,p_{r-1}}\frac{\log{p_1}+\log\log{N}}{p_1\dots p_{r-1}}\ll_{W_r,r}N(\log{N})^{k-1}(\log\log{N})^{r-2}.
\end{align}
This gives the result.
\end{proof}
\begin{lmm}
We have
\begin{align}
\sideset{}{^*}{\sum}_{p_1,\dots,p_{r-1}}&\frac{T^*_{q}}{q}\alpha\left(\frac{\log{p_1}}{\log{R_2}},\dots,\frac{\log{p_{r-1}}}{\log{R_2}}\right)\nonumber\\
&=(\log{R_2})^{k+1}\frac{\phi(A)\mathfrak{S}(\mathcal{L})}{A(k-2)!}\int\dots\int \frac{I_1(u_1,\dots,u_{r-1})\alpha(u_1,\dots,u_{r-1})}{u_1u_2\dots u_{r-1}}du_1\dots du_{r-1}\nonumber\\
&\qquad+O\left((\log\log{N})^{r}(\log{N})^{k}\right)
\end{align}
Where the integration is subject to the constraints
\begin{align}
\epsilon<u_1<\dots<u_{r-1},\qquad \text{and}\qquad \sum_{i=1}^{r-1}u_i\le \min(r_2^{-1}-1,r_2^{-1}-u_{r-1}).
\end{align}
\end{lmm}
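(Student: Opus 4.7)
The plan is to substitute the asymptotic formula for $T^*_q$ given by Lemma \ref{lmm:TResult} into the sum, and then apply Lemma \ref{lmm:PrimeSummation} iteratively, once for each of the $r-1$ prime variables, to convert the multiple prime sum into the claimed multiple integral.

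First, write $T^*_q = M_q + E_q$, where $M_q$ is the $I_1$-main term from Lemma \ref{lmm:TResult} and $E_q = O_r((\log R_2)^k \log\log N)$. The contribution of $E_q$ is handled trivially: since $|\alpha| \ll_{W_r} 1$ and, by Mertens' estimate,
\begin{equation*}
\sideset{}{^*}{\sum}_{p_1,\dots,p_{r-1}} \frac{1}{p_1\cdots p_{r-1}} \ll (\log\log N)^{r-1},
\end{equation*}
the resulting contribution is $O_{r,W_r}((\log N)^k (\log\log N)^r)$, which falls within the claimed error.

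For the main part one is left with
\begin{equation*}
(\log R_2)^{k+1}\frac{\phi(A)\mathfrak{S}(\mathcal{L})}{A(k-2)!}\sideset{}{^*}{\sum}_{p_1,\dots,p_{r-1}} \frac{I_1(u_1,\dots,u_{r-1})\,\alpha(u_1,\dots,u_{r-1})}{p_1\cdots p_{r-1}},
\end{equation*}
with $u_i = \log p_i/\log R_2$. Under the change of variables $u_i = r_2^{-1}\log p_i/\log N$, the constraint $(\log p_1/\log N,\dots,\log p_{r-1}/\log N)\in \mathcal{A}_r$ rewrites exactly as the integration region described in the conclusion. Next, I apply Lemma \ref{lmm:PrimeSummation} successively to $p_{r-1}, p_{r-2}, \dots, p_1$. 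At each stage, with the other variables frozen, the single-variable integrand is piecewise smooth (inheriting this from $\tilde P^+$, $W_r$ and the defining inequalities of $\mathcal{A}_r$), uniformly bounded, and has derivative $\ll_{W_r,r} 1$ (using the hypothesis $\partial W_r/\partial x_j \ll W_r$ from Proposition \ref{prpstn:ErResult} and differentiation under the integral sign in the definition of $I_1$); the hypothesis $V(x) \ll x$ is automatic since the lower endpoint of each integration is bounded below by $\epsilon > 0$. Each application introduces an error $O_{W_r,r}(\log\log N/\log N)$ relative to the main term, and accumulating factors of $O(\log\log N)$ from the other integrations yields a total error $O((\log N)^k (\log\log N)^r)$, as required.

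The main obstacle is keeping the piecewise-smooth structure of $I_1$ under control as we iterate: $\tilde P^+$ is only $C^0$ at $0$, so each summand $\tilde P^+(1-t-\sum_{j\in J}x_j)$ switches between active and zero along a hyperplane in $(t,u_1,\dots,u_{r-1})$-space, and the constraints defining $\mathcal{A}_r$ introduce further discontinuities. Before applying the iterated partial summation, I would decompose the $u$-domain into $O_r(1)$ closed pieces on each of which every $\tilde P^+$ argument has a fixed sign and the constraint region is defined by smooth inequalities in a single variable at a time. On each piece $I_1\alpha$ is smooth in each $u_j$ separately with bounded derivatives, and the $(r-2)$-dimensional interface faces contribute negligibly both to the sums (by trivial estimates) and to the integrals (being measure zero), so the partial-summation argument goes through piece by piece and assembles into the claim.
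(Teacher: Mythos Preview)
Your proposal is correct and follows essentially the same route as the paper: insert the asymptotic for $T^*_q$ from Lemma~\ref{lmm:TResult}, bound the error contribution by Mertens, and then iterate Lemma~\ref{lmm:PrimeSummation} over $p_{r-1},\dots,p_1$. The only notable difference is in how you verify the hypothesis $V(x)\ll x$ at each stage: the paper proves the structural bound $I_1(u_1,\dots,u_{r-1})\ll_r u_j^2$ (by pairing subsets $J$ with $J\cup\{j\}$) and deduces $V_j\ll u_j$ directly, whereas you observe that $u_j$ is bounded below by a fixed positive constant (coming from the $\epsilon$ in $\mathcal{A}_r$), so $V_j\ll 1\ll u_j$ trivially---both are valid here, and your shortcut is arguably cleaner for this particular lemma.
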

\begin{proof}
By Lemma \ref{lmm:TResult} for $q=p_1p_2\dots p_{r-1}$ we have
\begin{equation}T_q^*=(\log{R_2})^{k+1}\frac{\phi(A)\mathfrak{S}(\mathcal{L})}{A(k-2)!}I_1\left(\frac{\log{p_1}}{\log{R_2}},\dots ,\frac{\log{p_{r-1}}}{\log{R_2}}\right)+O_r((\log{N})^{k}\log\log{N}).\end{equation}
Thus summing the error term over $p_1,\dots p_{r-1}$ gives a contribution
\begin{align}
&\sideset{}{^*}{\sum}_{p_1,\dots,p_{r-1}}\frac{1}{q}\alpha\left(\frac{\log{p_1}}{\log{R_2}},\dots,\frac{\log{p_{r-1}}}{\log{R_2}}\right)(\log{N})^k\log\log{N}\nonumber \\
&\ll_{W_r} (\log{N})^{k}\log\log{N}\left(\sum_{p\le N}\frac{1}{p}\right)^{r-1}\nonumber \\
&\ll_{W_r} (\log{N})^{k}(\log\log{N})^r.
\end{align}
We are therefore left to evaluate the main term
\begin{equation}
\sideset{}{^*}{\sum}_{p_1,\dots,p_{r-1}}\frac{1}{q}\alpha\left(\frac{\log{p_1}}{\log{R_2}},\dots,\frac{\log{p_{r-1}}}{\log{R_2}}\right)I_1\left(\frac{\log{p_1}}{\log{R_2}},\dots ,\frac{\log{p_{r-1}}}{\log{R_2}}\right).
\end{equation}
We will now apply Lemma \ref{lmm:PrimeSummation} to $p_{r-1},\dots,p_1$ in turn to estimate the sum $\sideset{}{^*}{\sum}\alpha(q)T_q^*q^{-1}$.

For $u_1,\dots,u_j\in [0,r_2^{-1}]$ we put
\begin{equation}V_j(u_1,\dots,u_j)=\idotsint \frac{1}{\prod_{i=j+1}^{r-1}u_i}\alpha\left(u_1,\dots,u_{r-1}\right)I_1\left(u_{1},\dots,u_{r-1}\right)d u_{j+1}\dots d u_{r-1},\end{equation}
where the integration is subject to $u_j<u_{j+1}<\dots<u_{r-1}$ and $\sum_{1}^{r-1}u_i\le \min(r_2^{-1}-1,r_2^{-1}-u_{r-1})$.

As in the proof of Lemma \ref{lmm:TSize}, since $\tilde{P}$ is continuous and its derivative is uniformly bounded on $[0,1]$, we have that
\begin{align}
I_1(u_1,&\dots,u_{r-1})=\int_0^1\left(\sum_{J\subset\{1,\dots,r-1\}}\tilde{P}^+\left(1-t-\sum_{i\in J}u_i\right)(-1)^{|J|}\right)^2 t^{k-2} d t\nonumber\\
&\ll_r \int_0^1\sum_{J\subset\{1,\dots,r-1\}\backslash \{j\}}\left(\tilde{P}^+\left(1-t-\sum_{i\in J}u_i\right)-\tilde{P}^+\left(1-t-u_j-\sum_{i\in J}u_i\right)\right)^2 t^{k-2} d t\nonumber\\
&\ll_r u_j^2.
\end{align}
Thus, since $\alpha(u_1,\dots,u_{r-1})\ll 1$, we have uniformly for $u_1,\dots,u_j\in[0,r_2^{-1}]$
\begin{align}
V_j(u_1,\dots,u_j)&\ll u_j^2\int\dots\int\frac{1}{\prod_{i=j+1}^{r-1}u_i}du_{j+1}\dots du_{r-1}\nonumber\\
&\ll u_j^2(1+|\log{1/u_j}|^r)\nonumber\\
&\ll u_j.
\end{align}
Moreover, essentially the same argument shows that uniformly for $u_1,\dots,u_j\in[0,r_2^{-1}]$ we have
\begin{equation}
\frac{\partial}{\partial u_j}I_1(u_1,\dots,u_{r-1})\ll_r u_j.
\end{equation}
Thus since
\begin{equation}
\frac{\partial}{\partial u_j}\alpha(u_1,\dots,u_{r-1})\ll_r 1
\end{equation}
we have that
\begin{align}
\frac{\partial}{\partial u_j}V_j(u_1,\dots,u_j)&\ll u_j\idotsint\frac{1}{\prod_{i=j+1}^{r-1}u_i}du_{j+1}\dots du_{r-1}\nonumber\\
&\ll 1.
\end{align}
Thus the condition of Lemma \ref{lmm:PrimeSummation} applies for the function $V_j$. Applying Lemma \ref{lmm:PrimeSummation} in turn to $V_{r-1},V_{r-2},\dots,V_{1}$ gives the result. We note that the error terms contribute a total which is $\ll (\log{N})^{k}(\log\log{N})^{r-1}$.
\end{proof}

\section{Proof of Proposition \ref{prpstn:SquareFree}}
By Lemma \ref{lmm:LambdaSize} we have $\lambda_d\ll (\log{N})^k$. Therefore we have 
\begin{align}
\sum_{p\le AN^{1/2}}\sum_{\substack{N\le n \le 2N\\ p^2|\Pi(n)}}\left(\sum_{\substack{d|\Pi(n)\\ d\le R_2}}\lambda_d\right)^2&=N\sum_{p\le AN^{1/2}}\sideset{}{'}{\sum}_{d,e\le R_2}\frac{\lambda_d\lambda_e}{f([d,e,p^2])}+O\left(\sum_{p\le AN^{1/2}}\sum_{d,e\le R_2}|\lambda_d\lambda_er_{[d,e,p^2]}|\right)\nonumber\\
&\ll N\frac{T_{p}}{p^2}+O\left((\log{N})^{2k}\sum_{r\le R_2^2AN^{1/2}}\mu^2(r)(7k)^{\omega(r)}\right).
\end{align}
We first bound the error term
\begin{align}
\sum_{r\le R_2^2AN^{1/2}}\mu^2(r)(7k)^{\omega(r)}&\ll R_2^2N^{1/2}\sum_{r\le AR_2^2N^{1/2}}\frac{\mu^2(r)(7k)^{\omega(r)}}{r}\nonumber\\
&\ll R_2^2N^{1/2}\prod_{p\le AR_2^2N^{1/2}}\left(1+\frac{7k}{p}\right)\nonumber\\
&\ll R_2^2N^{1/2}(\log{N})^{7k}.
\end{align}
Thus for $R_2\le N^{1/4}(\log{N})^{-5k}$ the error term is $O(N)$.

By Lemma \ref{lmm:TSize} we have that
\begin{equation}T_{p}\ll (\log{N})^{k-1}\log{p}+(\log{N})^{k-1}\log\log{N}.\end{equation}
Thus
\begin{align}
\sum_{p\le AN^{1/2}}\sum_{\substack{N\le n \le 2N\\ p^2|\Pi(n)}}\left(\sum_{\substack{d|\Pi(n)\\ d\le R_2}}\lambda_d\right)^2&\ll N(\log{N})^{k-1}\sum_{p\le N^{1/4}}\frac{\log{p}+\log\log{N}}{p^2}+O(N)\nonumber\\
&\ll N(\log{N})^{k-1}\log\log{N}.
\end{align}
\section{Acknowledgment}
I would like to thank my supervisor, Prof. Heath-Brown and Dr. Craig Franze for many helpful comments.
\bibliographystyle{acm}
\bibliography{bibliography}
\end{document}